\numberwithin{equation}{subsection}
\numberwithin{figure}{subsection}
\numberwithin{table}{subsection}
\newenvironment{Ack}%
{\par \vspace{\baselineskip}%
 \noindent \textbf{Acknowledgements.}}%
{\par \vspace{\baselineskip}}
\newtheorem{df}{Definition}[section]
\newtheorem{thm}[df]{Theorem}
\newtheorem{lem}[df]{Lemma}
\newtheorem{cor}[df]{Corollary}
\newtheorem{ex}[df]{Example}
\newtheorem{rmk}[df]{Remark}
\title{Arithmetically Cohen--Macaulay bundles on homogeneous varieties of Picard rank one} 
\author{Yusuke Nakayama}
\date{}
\begin{document}
\maketitle
\begin{abstract}
In this paper, we study arithmetically Cohen--Macaulay (ACM) bundles on homogeneous varieties $G/P$.
Indeed we characterize the homogeneous ACM bundles on $G/P$ of Picard rank one in terms of highest weights. This is a generalization of the result on $G/P$ of classical types presented by Costa and Mir\'{o}-Roig for type $A$, and Du, Fang, and Ren for types $B,C$ and $D$. As a consequence we prove that only finitely many irreducible homogeneous ACM bundles, up to twisting line bundles, exist over all such $G/P$. Moreover, we derive the list of the highest weights of the irreducible homogeneous ACM bundles on particular homogeneous varieties of exceptional types such as the Cayley Plane and the Freudenthal variety.
\end{abstract}
\section{Introduction}
Vector bundles over projective varieties have been studied for many years. For instance, Horrocks \cite{Hor} showed that vector bundles on a projective space over a field of characteristic zero split as the direct sum of line bundles if and only if they have no intermediate cohomology. Since this result was established, research on indecomposable bundles without intermediate cohomology on projective varieties has garnered considerable attention. In particular, arithmetically Cohen--Macaulay (ACM) bundles have been studied extensively.  A vector bundle, $E$, on a projective variety, $X$, is an {\it ACM\ bundle\/} if  $H^{i}(X,E(t))$ vanishes for $0<i<{\rm dim}\ X$ and for all $t\in \mathbb{Z}$ (see subsection $2.1$). ACM bundles correspond to maximal Cohen--Macaulay  modules over the associated graded ring.

ACM bundles have been studied over  particular varieties. The first nontrivial case involves two-dimensional varieties. For example, Casanellas--Hartshorne \cite{CH} proved the existence of stable Ulrich ACM bundles of an arbitrary rank on smooth cubic surfaces. This was the first example of indecomposable ACM bundles of an arbitrarily high rank on any variety except curves. Various other studies have also been conducted on this case (see \cite{BHP},\cite{CN},\cite{Fae},\cite{Wat},\cite{Yos}). In the case of three-dimensional varieties, Casnati--Faenzi--Malaspina \cite{CFM} classified all rank-two indecomposable ACM bundles on the Fano three-fold with a Picard number of three. In addition, Filip \cite{Fil} classified rank-two indecomposable ACM bundles on the general complete intersection of Calabi--Yau three-folds in projective space. Other studies have also been conducted on this topic (see \cite{BF} and \cite{RT}).

The problem of classifying ACM bundles has also been studied on homogeneous varieties. Using the Borel--Weil--Bott theorem, Costa--Mir\'{o}-Roig \cite{CM} classified irreducible homogeneous ACM bundles on Grassmannian varieties. Recently, such bundles on isotropic Grassmannians of types $B$, $C$, and $D$ were classified by Du--Fang--Ren \cite{DFR}.

The aim of this study is to classify all irreducible homogeneous ACM bundles over homogeneous varieties $X=G/P$, where $G$ is a semi-simple linear algebraic group and $P$ is a maximal parabolic subgroup. This is a generalization of the work of Costa--Mir\'{o}-Roig, and Du--Fang--Ren. We derive the necessary and sufficient conditions for an irreducible homogeneous bundle on a homogeneous variety $X$ to be an ACM bundle. The results indicate that only finitely many irreducible homogeneous ACM bundles up to twisting line bundles exist over homogeneous varieties $X$. Moreover, we derive the conditions for the highest weight for an irreducible homogeneous vector bundle on particular homogeneous varieties of an exceptional Dynkin type to be an ACM bundle. Here, it would be appropriate to mention that there is an another interesting class called Ulrich bundles. Such bundles on a homogeneous variety were studied by \cite{Fon} and \cite{LP}.

The Borel--Weil--Bott theorem is applied in the proof. To verify whether the conditions of this theorem are satisfied, we compute the pairing values of the highest weight shifted by the sum of all fundamental weights with any positive roots. 
\subsection{Statement of results}
Let $G$ be a semi-simple linear algebraic group over the complex field and $P_{\alpha_{k}}$ be the maximal parabolic subgroup associated to the simple root, $\alpha_{k}$. A vector bundle, $E$, over $G/P_{\alpha_{k}}$ is {\it homogeneous} if the action of $G$ over $G/P_{\alpha_{k}}$ can be lifted to $E$. This can be represented by $G\times_{\rho}V$, where $\rho:P_{\alpha_{k}}\to GL(V)$ is a representation of $P_{\alpha_{k}}$. If this representation is irreducible, we call $E$ an {\it irreducible\ homogeneous\ bundle\/}. We use $E_{\lambda}$ to denote the irreducible homogeneous vector bundle arising from the irreducible representation of $P_{\alpha_{k}}$ with the highest weight, $\lambda$. We define set $\Phi_{k,G}^{+}$ as follows:
$$\Phi_{k,G}^{+}:=\{\alpha\in\Phi^{+}_{G}\ |\ (\varpi_{k},\alpha)\neq0\},$$
where $\Phi_{G}^{+}$ is the set of positive roots and $(,)$ denotes the Killing form. For any $\alpha \in \Phi_{k,G}^{+}$, we denote $(\varpi_{k},\alpha)$ by $c_{\alpha,k}$. Moreover, for any irreducible homogeneous vector bundle, $E_{\lambda}$, on $G/P_{\alpha_{k}}$ with the highest weight, $\lambda$, we define set $T^{G}_{\lambda,k}$ as follows:
$$T^{G}_{\lambda,k}:=\left\{\frac{1}{c_{\alpha,k}}(\lambda+\rho,\alpha)\ |\ \alpha\in\Phi_{k,G}^{+}\right\}.$$
Note that this set essentially corresponds to the $step\ matrix$ in \cite{CM} and \cite{DFR}. For positive integer $n$, we write $[1,n]$ for the set $\{1,2,\cdots,n\}$.  We now state the main results of this study.
\begin{thm}\label{Thm} Let $E_{\lambda}$ be an initialized irreducible homogeneous vector bundle over $G/P_{\alpha_{k}}$ with highest  weight $\lambda$. Then, $E_{\lambda}$ is an ACM vector bundle if and only if $T_{\lambda,k}^{G}\cap\mathbb{Z}=[1,M^{G}_{\lambda,k}]$, where $M^{G}_{\lambda,k}$ is the maximal element in $T_{\lambda,k}^{G}$.
\end{thm}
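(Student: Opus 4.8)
The plan is to unwind the statement directly through the Borel--Weil--Bott theorem. First I would recall the two structural facts: $\operatorname{Pic}(G/P_{\alpha_k})\cong\mathbb{Z}$ is generated by the line bundle attached to $\varpi_k$, so that twisting gives $E_\lambda(t)=E_{\lambda+t\varpi_k}$; and $\dim G/P_{\alpha_k}$ equals the number of positive roots outside the Levi factor of $P_{\alpha_k}$, namely exactly $\lvert\Phi^{+}_{k,G}\rvert=:N$, since the roots of the Levi are precisely those $\beta$ with $(\varpi_k,\beta)=0$. Bott's algorithm then says: if $\lambda+t\varpi_k+\rho$ is singular, all $H^{i}(G/P_{\alpha_k},E_\lambda(t))$ vanish; otherwise the cohomology is concentrated in the single degree $\ell(w)=\#\{\beta\in\Phi^{+}_{G}:(\lambda+t\varpi_k+\rho,\beta)<0\}$, where it is nonzero.

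Next I would localize the computation to $\Phi^{+}_{k,G}$. Because $\lambda$ is $P_{\alpha_k}$-dominant, $\lambda+\rho$ is strictly positive on every root of the Levi factor, so such roots neither make $\lambda+t\varpi_k+\rho$ singular nor contribute to $\ell(w)$. For $\beta\in\Phi^{+}_{k,G}$ one has $c_{\beta,k}=(\varpi_k,\beta)>0$ and
\[
(\lambda+t\varpi_k+\rho,\beta)=(\lambda+\rho,\beta)+t\,c_{\beta,k}=c_{\beta,k}\Bigl(\tfrac{1}{c_{\beta,k}}(\lambda+\rho,\beta)+t\Bigr),
\]
so this pairing has the sign of $n_\beta+t$, where $n_\beta:=\tfrac{1}{c_{\beta,k}}(\lambda+\rho,\beta)\in T^{G}_{\lambda,k}$. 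Hence $\lambda+t\varpi_k+\rho$ is singular iff $-t\in T^{G}_{\lambda,k}$, and otherwise $\ell(w)=\#\{\beta\in\Phi^{+}_{k,G}:n_\beta<-t\}$.

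From here the equivalence is a counting argument. If an integer $m$ satisfies $\min T^{G}_{\lambda,k}<m<M^{G}_{\lambda,k}$ and $m\notin T^{G}_{\lambda,k}$, then $t=-m$ makes $\lambda+t\varpi_k+\rho$ regular with $1\le\ell(w)\le N-1$ (the root attaining the minimum is counted, the one attaining the maximum is not), so $E_\lambda$ has nonzero intermediate cohomology and is not ACM. Conversely, a nonvanishing $H^{j}(G/P_{\alpha_k},E_\lambda(t))$ with $0<j<N$ forces $\lambda+t\varpi_k+\rho$ regular with $j=\ell(w)$, and then $m=-t$ is an integer lying strictly between $\min T^{G}_{\lambda,k}$ and $M^{G}_{\lambda,k}$ but outside $T^{G}_{\lambda,k}$. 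Thus $E_\lambda$ is ACM precisely when every integer in the open interval $(\min T^{G}_{\lambda,k},M^{G}_{\lambda,k})$ lies in $T^{G}_{\lambda,k}$.

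Finally I would use the hypothesis that $E_\lambda$ is initialized to fix the left endpoint. Writing any $\beta\in\Phi^{+}_{k,G}$ as $d_k\alpha_k$ plus a non-negative combination of Levi simple roots, with $d_k\ge 1$, and using $(\lambda+\rho,\alpha_j)\ge 0$ for $j\ne k$, gives $n_\beta\ge n_{\alpha_k}$; and since $(\varpi_k,\alpha_k^{\vee})=1$ one computes $n_{\alpha_k}=(\lambda,\alpha_k^{\vee})+1\in\mathbb{Z}$, so $\min T^{G}_{\lambda,k}=(\lambda,\alpha_k^{\vee})+1$ is an integer. Now $H^{0}(E_\lambda(t))\ne 0$ iff $\lambda+t\varpi_k+\rho$ is strictly dominant, i.e.\ iff $-t<\min T^{G}_{\lambda,k}$; so the initialization $H^{0}(E_\lambda)\ne 0$, $H^{0}(E_\lambda(-1))=0$ translates into $0<\min T^{G}_{\lambda,k}\le 1$, forcing $\min T^{G}_{\lambda,k}=1$. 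Combining this with the previous paragraph, $1\in T^{G}_{\lambda,k}$ is its least element, every integer in $T^{G}_{\lambda,k}$ is $\ge 1$, and $E_\lambda$ is ACM iff $T^{G}_{\lambda,k}\cap\mathbb{Z}$ is the full integer interval $[1,M^{G}_{\lambda,k}]$ (so that, in particular, $M^{G}_{\lambda,k}$ is then the largest integer of $T^{G}_{\lambda,k}$). I expect the genuinely delicate points to be only the bookkeeping identifying $\lvert\Phi^{+}_{k,G}\rvert$ with $\dim G/P_{\alpha_k}$ and the careful handling of the "initialized" normalization; the heart of the argument is the sign computation of the second paragraph, which is exactly the "pairing values" advertised in the introduction, and there is no deeper obstacle.
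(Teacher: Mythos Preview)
Your argument is correct and follows essentially the same route as the paper: both reduce the ACM condition via Borel--Weil--Bott to the trichotomy ``regular of index $0$ / singular / regular of index $N$'' for $\lambda+\rho$ shifted by multiples of $\varpi_k$, observe that only roots in $\Phi^{+}_{k,G}$ contribute, and translate each case into an inequality or membership statement for the numbers $n_\beta=\tfrac{1}{c_{\beta,k}}(\lambda+\rho,\beta)$. Your substitution $m=-t$ matches the paper's convention $\lambda+\rho-t\varpi_k$, and your direct derivation of $\min T^{G}_{\lambda,k}=n_{\alpha_k}=a_k+1=1$ from initialization is exactly the content of the paper's Lemma~2.3 combined with its Lemma~3.5; the remaining steps are the paper's Lemmas~3.6 and~3.7 packaged as a single counting argument.
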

This result was obtained by Costa and Mir\'{o}-Roig for the case in which $G/P_{\alpha_{k}}$ is Grassmannian of type $A$. Moreover, when $G/P_{\alpha_{k}}$ is an isotropic Grassmannian of type $B,\ C$, or $D$ , this result was obtained by Du, Fang, and Ren. Therefore, the novelty of the results of this study lies in proving the statement over other varieties of exceptional types.  In this paper, we present a unified proof. From this theorem, we can obtain the following corollary.
\begin{cor}\label{Cor} Only finitely many irreducible homogeneous ACM bundles up to a tensoring line bundle exist on $G/P_{\alpha_{k}}$.
\end{cor}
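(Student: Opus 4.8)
The plan is to derive the finiteness directly from Theorem~\ref{Thm}, using that tensoring an irreducible homogeneous bundle by a line bundle merely translates its highest weight along $\varpi_{k}$. Throughout I take $G$ to be simple, as in the situations of \cite{CM} and \cite{DFR}. Recall that a highest weight of an irreducible representation of $P_{\alpha_{k}}$ is a weight $\lambda=\sum_{i}a_{i}\varpi_{i}$ with $a_{i}\in\mathbb{Z}_{\geq 0}$ for $i\neq k$ and $a_{k}\in\mathbb{Z}$, and that $\operatorname{Pic}(G/P_{\alpha_{k}})=\mathbb{Z}\cdot\mathcal{O}(1)$ with $\mathcal{O}(1)=E_{\varpi_{k}}$, so that $E_{\lambda}(t)=E_{\lambda+t\varpi_{k}}$. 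Hence the tuple $(a_{i})_{i\neq k}$ is unchanged under twisting by line bundles and determines the class of $E_{\lambda}$ modulo such twists completely; moreover, since the ACM condition of the Introduction ranges over all $t\in\mathbb{Z}$, it is itself invariant under twisting. It therefore suffices to prove that only finitely many tuples $(a_{i})_{i\neq k}$ occur among ACM bundles.

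Given an ACM class, I would first pass to an initialized representative $E_{\lambda}$ in it: such a twist exists because, by Borel--Weil--Bott, $H^{0}(G/P_{\alpha_{k}},E_{\mu})\neq 0$ exactly when $\mu$ is dominant, so the least $t$ with $H^{0}(E_{\lambda}(t))\neq 0$ is attained and realizes the initialization. For this representative one has $a_{i}\geq 0$ for \emph{all} $i$, and Theorem~\ref{Thm} applies: $T^{G}_{\lambda,k}\cap\mathbb{Z}=[1,M^{G}_{\lambda,k}]$. Since $T^{G}_{\lambda,k}$ is the image of the finite set $\Phi^{+}_{k,G}$, it has at most $\#\Phi^{+}_{k,G}$ elements; as the integers in $T^{G}_{\lambda,k}$ are exactly $1,2,\dots,M^{G}_{\lambda,k}$, this forces
\begin{equation*}
M^{G}_{\lambda,k}\ \leq\ \#\Phi^{+}_{k,G},
\end{equation*}
a bound depending only on $G/P_{\alpha_{k}}$.

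It then remains to bound each $a_{j}$ with $j\neq k$ in terms of $M^{G}_{\lambda,k}$. Let $\theta=\sum_{i}m_{i}\alpha_{i}$ be the highest root of $G$; as $G$ is simple, $m_{i}\geq 1$ for every $i$. For any positive root $\alpha=\sum_{i}n_{i}\alpha_{i}$ one has $(\varpi_{j},\alpha)=n_{j}(\alpha_{j},\alpha_{j})/2\geq 0$; in particular $(\varpi_{k},\theta)=m_{k}(\alpha_{k},\alpha_{k})/2\neq 0$, so $\theta\in\Phi^{+}_{k,G}$ with $c_{\theta,k}=m_{k}(\alpha_{k},\alpha_{k})/2$. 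Writing $\lambda+\rho=\sum_{i}(a_{i}+1)\varpi_{i}$, every coefficient $a_{i}+1$ is $\geq 1$, so keeping only the $j$-th summand yields
\begin{equation*}
M^{G}_{\lambda,k}\ \geq\ \frac{(\lambda+\rho,\theta)}{c_{\theta,k}}\ =\ \frac{\sum_{i}(a_{i}+1)\,m_{i}(\alpha_{i},\alpha_{i})}{m_{k}(\alpha_{k},\alpha_{k})}\ \geq\ (a_{j}+1)\,\frac{m_{j}(\alpha_{j},\alpha_{j})}{m_{k}(\alpha_{k},\alpha_{k})},
\end{equation*}
whence $a_{j}\leq \#\Phi^{+}_{k,G}\cdot\frac{m_{k}(\alpha_{k},\alpha_{k})}{m_{j}(\alpha_{j},\alpha_{j})}-1$ for each $j\neq k$. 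Thus only finitely many tuples $(a_{i})_{i\neq k}$ can arise, and since each such tuple corresponds to a single class of irreducible homogeneous bundles up to twisting by line bundles, the corollary follows. I expect the only genuine step to be this last estimate together with the observation that the highest root lies in $\Phi^{+}_{k,G}$; the rest is bookkeeping around Theorem~\ref{Thm} and standard facts about Borel--Weil--Bott and the Picard group of $G/P_{\alpha_{k}}$.
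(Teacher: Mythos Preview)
Your proof is correct and follows essentially the same route as the paper: normalize to the representative with $a_{k}=0$, use Theorem~\ref{Thm} to bound $M^{G}_{\lambda,k}\leq \#\Phi^{+}_{k,G}=\dim G/P_{\alpha_{k}}$, and then conclude that only finitely many tuples $(a_{i})_{i\neq k}$ are possible.

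The one genuine difference lies in the last step. The paper simply writes ``As $M^{G}_{\lambda,k}$ is a linear combination of $a_{i}$ and $a_{i}\geq 0$, there exist only a finite number of choices for $a_{i}$,'' tacitly relying on the explicit type-by-type formulas for $M^{G}_{\lambda,k}$ computed in Section~3, which show that every $a_{j}$ appears with a strictly positive coefficient. You instead give a uniform argument: the highest root $\theta$ lies in $\Phi^{+}_{k,G}$, and the single element $(\lambda+\rho,\theta)/c_{\theta,k}\in T^{G}_{\lambda,k}$ already dominates a positive multiple of each $a_{j}+1$. This is a small improvement in that it is type-independent and does not need the tables, but the overall strategy is the same as the paper's.
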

In addition, based on the aforementioned theorem, we can determine the conditions for the highest weight for an irreducible homogeneous vector bundle on particular homogeneous varieties of exceptional types (e.g., Cayley Plane and Freudenthal variety) to be an ACM bundle (see Section $4$).

\begin{Ack}
I am grateful to Takeshi Ikeda for their helpful advice and comments, to Hajime Kaji for beneficial comments, and to Satoshi Naito and Takafumi Kouno for useful advice. I would like to thank Editage (www.editage.com) for English language editing.
\end{Ack}
\section{Preliminaries}
We begin this section by reviewing some definitions and introducing the notations. All algebraic varieties in this study are defined over the field of complex numbers, $\mathbb{C}$.
\subsection{ACM bundles}
We introduce ACM bundles on a projective variety as this is the main focus of this study.
\begin{df} Let $\iota:X \subset \mathbb{P}^{N}$ be a projective variety with $\mathcal{O}_{X}(1):=\iota^{*}\mathcal{O}_{\mathbb{P}^{N}}(1)$. A vector bundle, $E$, over $X$ is called {\it arithmetically\ Cohen\ Macauley} (ACM)  if 
$$H^{i}(X,E(t))=0,\ {\rm where}\ E(t):=E\otimes_{\mathcal{O}_{X}}\mathcal{O}_{X}(t),\ {\rm for\ all}\ i=1,\cdots,{\rm dim}\ X-1\ {\rm and}\ t \in \mathbb{Z}.$$  
\end{df}
By definition, $E$ being an ACM bundle over a projective variety, $X$, is equivalent to $E(t)$ being an ACM bundle over $X$ for $t \in \mathbb{Z}$. In this paper, we only consider the case in which $X$ is a homogeneous variety of Picard rank one. In particular, we aim to classify all irreducible homogeneous ACM bundles over homogeneous varieties of Picard rank one.
\subsection{Homogeneous vector bundles}
Let $G$ be a semi-simple linear algebraic group over the complex field $\mathbb{C}$ and $T\subset G$ be a maximal torus. We set $\mathfrak{g}:=$Lie$(G)$ and $\mathfrak{h}:=$Lie$(T)$. Let $\Phi$ be the set of roots associated with pair $(\mathfrak{g},\mathfrak{h})$. We fix set $\Delta:=\{\alpha_{1},\cdots,\alpha_{n}\}$ of simple roots of $\Phi$. Let $\Phi^{+}$ be the set of positive roots. The {\it weight lattice} $\Lambda$ is the set of all linear functions $\lambda:\mathfrak{h}\to\mathbb{C}$ for which $\frac{2(\lambda,\alpha)}{(\alpha,\alpha)}\in \mathbb{Z}$ for any $\alpha \in \Phi$, where $(,)$ denotes the Killing form. We define $\lambda \in \Lambda$ to be {\it dominant\/} if all integers $\frac{2(\lambda,\alpha)}{(\alpha,\alpha)}\ (\alpha \in \Phi)$ are nonnegative and {\it strongly\ dominant\/} if they are positive. Let $\varpi_{1},\cdots,\varpi_{n}\in \Lambda$ be the {\it fundamental weights\/}, i.e., $\frac{2(\varpi_{i},\alpha_{j})}{(\alpha_{j},\alpha_{j})}=\delta_{i,j}$. From this definition, $\lambda=\sum_{i=1}^{n}a_{i}\varpi_{i}$ is a dominant weight if and only if $a_{i}\geq 0$ and a strongly dominant weight if and only if $a_{i}>0$ for $1\leq i \leq n$. For each dominant $\lambda\in \Lambda$, the irreducible highest weight representation of  $\mathfrak{g}$ with highest weight $\lambda$ is denoted by $V_\lambda$.

For $\alpha_{k}\in \Delta$, let $\Phi^{-}(\alpha_{k})$ be a set consisting of negative roots $\alpha$ such that $\alpha=\sum_{\alpha_{i}\neq\alpha_{k}}a_{i}\alpha_{i}$. Let
$$\mathfrak{p}_{\alpha_{k}}:=\mathfrak{h}{\huge \oplus}(\oplus_{\alpha\in\Phi^{+}}\mathfrak{g}_{\alpha})\oplus(\oplus_{\alpha\in\Phi^{-}(\alpha_{k})}\mathfrak{g}_{\alpha}),$$
and $P_{\alpha_{k}}$ be a subgroup of $G$ such that the Lie algebra of $P_{\alpha_{k}}$ is $\mathfrak{p}_{\alpha_{k}}$, where each $\mathfrak{g}_{\alpha}$ is a one-dimensional eigenspace with respect to the adjoint action of $\mathfrak{h}$. Note that since the dimension of the Lie algebra of $P_{\alpha_{k}}$ is equal to the number of positive roots $\alpha$ such that $(\alpha,\varpi_{k})=0$, the cardinality of $\Phi_{k,G}^{+}$ is equal to the dimension of the homogeneous variety $G/P_{\alpha_{k}}$. 

Next, we consider vector bundles on $G/P$. In particular, we introduce an important class of vector bundles. A vector bundle, $E$, with fiber $V$ over $G/P$ is {\it homogeneous\/} if  there exists representation $\rho:P\to GL(V)$ such that $E\cong E_{\rho}$, where $E_{\rho}$ is a vector bundle over $G/P$ with fiber $V$ originating from the principal bundle, $G\to G/P$, via $\rho$. Let $E$ be a homogeneous vector bundle on $G/P$. If representation $\rho:P\to GL(V)$ is irreducible, we call $E$ an $irreducible\ homogeneous\ vector\ bundle$. Any homogeneous vector bundle can be decomposed into irreducible homogeneous vector bundles . Therefore, we only classify irreducible homogeneous vector bundles. 

We describe all irreducible representations of a parabolic subgroup, $P_{\alpha_{k}}$. Let $\alpha_{k}$ be a simple root and $\varpi_{k}$ be the corresponding fundamental weight. Let $S_{P_{\alpha_{k}}}$ be the semi-simple part of $P_{\alpha_{k}}$. Then, all irreducible representations of $P_{\alpha_{k}}$ are
$$V\otimes L_{\varpi_{k}}^{t},$$
where $V$ is an irreducible representation of $S_{P_{\alpha_{k}}}$, $t\in \mathbb{Z}$, and $L_{\varpi_{k}}$ is a one-dimensional representation with weight $\varpi_{k}$. If $\lambda$ is the highest weight of irreducible representation $V$ of $S_{P}$, $\lambda+t\varpi_{k}$ is the highest weight of irreducible representation $V\otimes L_{\varpi_{k}}^{t}$ of $P_{\alpha_{k}}$. 

In this study, $E_{\lambda}$ denotes the irreducible homogeneous vector bundle arising from the irreducible representation of $P_{\alpha_{k}}$ with highest weight $\lambda$.\ As any irreducible representation of a semi-simple Lie group is determined by its highest weight, if $E_{\lambda}$ is an irreducible homogeneous vector bundle on $G/P_{\alpha_{k}}$ with $\lambda=\sum a_{i}\varpi_{i}$, then $a_{i}\geq 0$ for $i\neq k$.

Finally, we introduce the following definition.
\begin{df} A vector bundle, $E$, on $G/P_{\alpha_{k}}$ is called {\it initialized\/} if 
$$H^{0}(G/P_{\alpha_{k}},E(-1))=0\ \ {\rm and}\ \ H^{0}(G/P_{\alpha_{k}},E)\neq 0.$$ 
\end{df}
For an initialized homogeneous vector bundle on a homogeneous variety, the following result is known (see \cite{DFR}).
\begin{lem}\label{lem} Let $E_{\lambda}$ be an initialized homogeneous vector bundle on $G/P_{\alpha_{k}}$ with $\lambda=\sum_{i=1}^{n}a_{i}\varpi_{i}$. Then, $a_{k}=0$.
\end{lem}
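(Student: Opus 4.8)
The plan is to read off the statement directly from the Borel--Weil--Bott theorem, which is in any case the main tool of the paper. The first point to pin down is the normalization of the twist: on $G/P_{\alpha_k}$ the Picard group is infinite cyclic, and as a homogeneous line bundle the ample generator $\mathcal{O}_{G/P_{\alpha_k}}(1)$ is the one associated to the character $\varpi_k$ of $P_{\alpha_k}$, i.e.\ $\mathcal{O}_{G/P_{\alpha_k}}(1) = E_{\varpi_k}$ in the notation of the paper. Consequently $E_\lambda(t) \cong E_{\lambda + t\varpi_k}$ for all $t \in \mathbb{Z}$, and in particular $E_\lambda(-1) \cong E_{\lambda - \varpi_k}$. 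Note that if $\lambda = \sum_i a_i \varpi_i$, then $\lambda - \varpi_k$ still has nonnegative coefficients at every index $i \neq k$, so $E_{\lambda - \varpi_k}$ is again a well-defined irreducible homogeneous bundle on $G/P_{\alpha_k}$.

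Next I would record the degree-zero part of Borel--Weil--Bott in the form needed here: for a $P_{\alpha_k}$-dominant weight $\mu = \sum_i b_i \varpi_i$ (that is, $b_i \geq 0$ for all $i \neq k$), one has $H^0(G/P_{\alpha_k}, E_\mu) \neq 0$ if and only if $\mu$ is a dominant weight of $G$, equivalently if and only if $b_k \geq 0$. Indeed, $H^0(G/P_{\alpha_k}, E_\mu)$ is nonzero exactly when $\mu + \rho$ (with $\rho = \sum_i \varpi_i$ the Weyl vector) is regular and lies in the open dominant Weyl chamber, i.e.\ when the unique Weyl group element carrying $\mu + \rho$ into the dominant chamber is the identity; this happens precisely when $\mu$ itself is dominant.

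It then remains to combine these observations with the two defining conditions of an initialized bundle. From $H^0(G/P_{\alpha_k}, E_\lambda) \neq 0$ and the criterion just stated, $\lambda$ is dominant, so $a_k \geq 0$. From $H^0(G/P_{\alpha_k}, E_\lambda(-1)) = H^0(G/P_{\alpha_k}, E_{\lambda - \varpi_k}) = 0$, the weight $\lambda - \varpi_k$ fails to be dominant; since its coefficients away from $k$ are the nonnegative numbers $a_i$, the only possible failure of dominance is $a_k - 1 < 0$, i.e.\ $a_k \leq 0$. Hence $a_k = 0$.

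I do not expect a genuine obstacle: the argument is essentially bookkeeping around two standard facts --- the identification of the twisting bundle $\mathcal{O}(1)$ with the homogeneous line bundle $E_{\varpi_k}$ (so that twisting affects only the $k$-th coordinate of the highest weight) and the vanishing criterion for $H^0$ coming from Bott's theorem. The only place where care is needed is keeping the sign conventions consistent, namely that it is $\mathcal{O}(1)$, not $\mathcal{O}(-1)$, that corresponds to $+\varpi_k$; this is forced by $\mathcal{O}(1)$ being ample with nonzero global sections. Finally, for an arbitrary, possibly reducible, initialized homogeneous bundle one reduces to the irreducible case by passing to a composition series of the inducing $P_{\alpha_k}$-module and transporting the two $H^0$-conditions along the resulting long exact cohomology sequences.
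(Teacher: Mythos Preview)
Your argument is correct. The paper does not actually prove this lemma; it merely records it as ``known (see \cite{DFR})'' and gives no proof of its own, so there is nothing to compare at the level of approach. What you wrote is the standard argument: $\mathcal{O}(1)=E_{\varpi_k}$ so twisting shifts only the $k$-th coordinate, and the $H^0$-part of Borel--Weil--Bott then pins $a_k$ between $0$ and $0$. One minor remark: the final paragraph about passing to a composition series for a general homogeneous bundle is unnecessary here, since in the paper's notation $E_\lambda$ already denotes the irreducible bundle with highest weight $\lambda$, and the lemma (despite the slightly loose phrasing) is only about that case.
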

\section{Proof of Theorem \ref{Thm}}
\ In this section, we prove Theorem \ref{Thm}. First, some definitions are presented.
\begin{df}\ Let $\lambda$ be a weight.\\
(i)\ $\lambda$ is called {\it singular\/} if there exists $\alpha \in \Phi^{+}$ such that $(\lambda,\alpha)=0$.\\
(ii)\ $\lambda$ is called {\it regular\ of\ index\/} $p$ if it is not singular and if there are exactly $p$ roots $\alpha_{1},\cdots,\alpha_{p} \in \Phi^{+}$ such that $(\lambda,\alpha_{i})<0$.
\end{df}
\ The following lemma is crucial  to the proof of the main result.
\begin{lem}\label{Lem2} Let $E_{\lambda}$ be an irreducible homogeneous vector bundle on a homogeneous variety $G/P_{\alpha_{k}}$ with the highest weight $\lambda$. Then, $E_{\lambda}$ is an ACM bundle if and only if one of the following conditions holds for each $t \in \mathbb{Z}$: \\ 
\ \ \ $1)\ \lambda+\rho-t\varpi_{k}$ is regular  of index $0$;\\
\ \ \ $2)\ \lambda+\rho-t\varpi_{k}$ is regular of index dim $G/P_{\alpha_{k}}$; and\\
\ \ \ $3)\ \lambda+\rho-t\varpi_{k}$ is singular.
\end{lem}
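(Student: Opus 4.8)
The plan is to obtain the statement as a direct translation of the Borel--Weil--Bott theorem applied to the twisted bundles $E_\lambda(t)$. First I would record how twisting interacts with highest weights: with the conventions of Section~2, tensoring by $\mathcal O_{G/P_{\alpha_k}}(1)$ changes the associated $P_{\alpha_k}$-representation by the character $-\varpi_k$, so $E_\lambda(t)$ is the irreducible homogeneous vector bundle attached to the weight $\lambda-t\varpi_k$. Hence, writing $\rho=\sum_{i=1}^n\varpi_i$, the cohomology of $E_\lambda(t)$ is controlled by the position of $\lambda+\rho-t\varpi_k$ relative to the walls of the Weyl chambers.

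Next I would invoke Borel--Weil--Bott for $G/P_{\alpha_k}$ in the following form: for an irreducible homogeneous bundle $E_\mu$ (with $\mu$ dominant for the semisimple part of $P_{\alpha_k}$), if $\mu+\rho$ is singular then $H^i(G/P_{\alpha_k},E_\mu)=0$ for all $i$; and if $\mu+\rho$ is regular of index $p$ --- that is, there are exactly $p$ positive roots on which $\mu+\rho$ takes a negative value, equivalently $\ell(w)=p$ for the unique Weyl group element $w$ making $w(\mu+\rho)$ strongly dominant --- then $H^i(G/P_{\alpha_k},E_\mu)$ vanishes for $i\ne p$ and equals $V_{w(\mu+\rho)-\rho}\ne 0$ for $i=p$. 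In particular, for each fixed $t$ the total cohomology $\bigoplus_i H^i(G/P_{\alpha_k},E_\lambda(t))$ is either identically zero or is concentrated in a single degree $p(t)$, namely the index of $\lambda+\rho-t\varpi_k$.

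A short preliminary observation is that $0\le p(t)\le d$, where $d:=\dim G/P_{\alpha_k}=|\Phi_{k,G}^+|$. Indeed, if $\alpha\in\Phi^+$ satisfies $(\varpi_k,\alpha)=0$ (equivalently, $\alpha$ is a root of the semisimple part of $P_{\alpha_k}$), then $(\lambda+\rho-t\varpi_k,\alpha)=(\lambda+\rho,\alpha)$, which is strictly positive: the coefficient of $\lambda+\rho$ at every fundamental weight other than $\varpi_k$ is at least $1$, and $\alpha$ pairs non-negatively --- and not all zero --- with those fundamental weights. Therefore the positive roots on which $\lambda+\rho-t\varpi_k$ can be negative all lie in $\Phi_{k,G}^+$, so $p(t)\le|\Phi_{k,G}^+|=d$. (Since $c_{\alpha,k}=(\varpi_k,\alpha)>0$ for every $\alpha\in\Phi_{k,G}^+$, one even gets $p(t)=0$ for $t\ll 0$ and $p(t)=d$ for $t\gg 0$, so both extremes occur; this is not needed here but will matter for deriving Theorem~\ref{Thm}.)

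Finally I would assemble the equivalence. By definition $E_\lambda$ is ACM precisely when $H^i(G/P_{\alpha_k},E_\lambda(t))=0$ for all $t\in\mathbb Z$ and all $i$ with $1\le i\le d-1$. Fix $t$ and apply the dichotomy above. If $\lambda+\rho-t\varpi_k$ is singular, every cohomology group of $E_\lambda(t)$ vanishes, so the required vanishing at $t$ holds; this is case $3)$. If $\lambda+\rho-t\varpi_k$ is regular, the only nonvanishing cohomology of $E_\lambda(t)$ sits in degree $p(t)\in[0,d]$, so the required vanishing at $t$ holds if and only if $p(t)\notin[1,d-1]$, i.e.\ $p(t)=0$ or $p(t)=d$ --- precisely cases $1)$ and $2)$. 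Letting $t$ range over $\mathbb Z$ gives exactly the stated characterization. I expect the only genuinely delicate point to be pinning down the version of Borel--Weil--Bott valid on the partial flag variety $G/P_{\alpha_k}$ and verifying that the cohomological degree it produces coincides with the ``index'' of a weight as defined above and is bounded by $\dim G/P_{\alpha_k}$; once the Levi-positivity observation of the previous paragraph is recorded, the remainder is a mechanical translation together with the twisting-versus-highest-weight bookkeeping.
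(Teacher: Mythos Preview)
Your proposal is correct and follows the same route as the paper: both reduce the lemma to a direct reading of the Borel--Weil--Bott theorem applied to the twists $E_{\lambda}(t)=E_{\lambda-t\varpi_k}$. The paper's own proof is the single sentence ``The result follows from the definition of ACM bundles and the following theorem,'' so your write-up is simply a fleshed-out version of that; in particular, your Levi-positivity observation that $(\lambda+\rho-t\varpi_k,\alpha)>0$ for every $\alpha\in\Phi^+\setminus\Phi_{k,G}^+$ (hence $p(t)\le\dim G/P_{\alpha_k}$) is exactly the point the paper records a few lines later inside the proof of Theorem~\ref{Thm}, so you are only anticipating what the paper does rather than arguing differently.
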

\begin{proof}
The result follows from the definition of ACM bundles and the following theorem.
\end{proof}
\begin{thm}[Borel-Weil-Bott] Let $E_{\lambda}$ be an irreducible homogeneous vector bundle over $G/P$.\\
(i) If $\lambda+\rho$ is singular, then
$$H^{i}(G/P,E_{\lambda})=0,\ \forall i.$$
(ii) If $\lambda+\rho$ is regular of index $p$, then
$$H^{i}(G/P,E_{\lambda})=0,\ \forall i\neq p$$
and
$$H^{p}(G/P,E_{\lambda})=V_{w(\lambda+\rho)-\rho},$$
where we write the sum of all fundamental weights for $\rho$;  $w(\lambda+\rho)$ is the unique element of the fundamental Weyl Chamber of $G$, which is congruent to $\lambda+\rho$ under the action of the Weyl group; and $V_{w(\lambda+\rho)-\rho}$ is an irreducible representation of $G$.
\end{thm}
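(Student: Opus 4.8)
The plan is to derive this classical theorem (due to Bott; I follow essentially Demazure's short argument, though one could equally use Kostant's computation of Lie-algebra cohomology) by first reducing to line bundles on the full flag variety $G/B$ and then inducting over the Weyl group along $\mathbb{P}^{1}$-fibrations. For the reduction, let $\pi\colon G/B\to G/P$ be the canonical projection, a locally trivial fibration whose fibre is the flag variety $P/B$ of the Levi factor of $P$, and write $\mathcal{L}_{\mu}$ for the line bundle on $G/B$ attached to a character $\mu$. Since the highest weight $\lambda$ of $E_{\lambda}$ is dominant for the Levi (that is, $\frac{2(\lambda,\beta)}{(\beta,\beta)}\ge 0$ for every simple root $\beta$ of the Levi), Borel--Weil applied fibrewise gives $\pi_{*}\mathcal{L}_{\lambda}\cong E_{\lambda}$ and $R^{j}\pi_{*}\mathcal{L}_{\lambda}=0$ for $j>0$, so the Leray spectral sequence degenerates to $H^{i}(G/P,E_{\lambda})\cong H^{i}(G/B,\mathcal{L}_{\lambda})$ for all $i$. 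It therefore suffices to prove, for an arbitrary weight $\mu$, that $H^{\bullet}(G/B,\mathcal{L}_{\mu})=0$ when $\mu+\rho$ is singular, and that $H^{i}(G/B,\mathcal{L}_{\mu})=0$ for $i\ne p$ while $H^{p}(G/B,\mathcal{L}_{\mu})=V_{w(\mu+\rho)-\rho}$ when $\mu+\rho$ is regular of index $p$, where $w\in W$ is the unique Weyl group element with $w(\mu+\rho)$ dominant (for which $\ell(w)=p$).

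The base case is Borel--Weil: for $\mu$ dominant, $H^{i}(G/B,\mathcal{L}_{\mu})=0$ for $i>0$, and $H^{0}(G/B,\mathcal{L}_{\mu})$ is the irreducible $G$-module of highest weight $\mu$ (up to the usual dualization convention, i.e.\ possibly $V_{-w_{0}\mu}$ with $w_{0}$ the longest element of $W$). I would obtain the identification of $H^{0}$ from the Peter--Weyl decomposition $\mathbb{C}[G]=\bigoplus_{\nu}V_{\nu}^{*}\otimes V_{\nu}$ by extracting the $B$-semiinvariants of the prescribed weight (equivalently, via Frobenius reciprocity), and the higher vanishing from Kodaira vanishing, since $\mathcal{L}_{\mu}\otimes\omega_{G/B}^{-1}=\mathcal{L}_{\mu+2\rho}$ is ample when $\mu$ is dominant and we work over $\mathbb{C}$. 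The inductive engine is Demazure's lemma: for a simple root $\alpha_{i}$ let $p_{i}\colon G/B\to G/Q_{i}$ be the $\mathbb{P}^{1}$-fibration associated to $\alpha_{i}$ (with $Q_{i}\supsetneq B$ the corresponding minimal parabolic), and put $m:=\frac{2(\mu+\rho,\alpha_{i})}{(\alpha_{i},\alpha_{i})}$. Computing $R^{\bullet}(p_{i})_{*}\mathcal{L}_{\mu}$ fibrewise on $\mathbb{P}^{1}$ yields: if $m=0$ then $R^{\bullet}(p_{i})_{*}\mathcal{L}_{\mu}=0$, hence $H^{\bullet}(G/B,\mathcal{L}_{\mu})=0$; if $m\ne 0$ then there is an isomorphism $H^{j}(G/B,\mathcal{L}_{\mu})\cong H^{j+\varepsilon}(G/B,\mathcal{L}_{s_{i}\cdot\mu})$ for every $j$, where $s_{i}\cdot\mu:=s_{i}(\mu+\rho)-\rho$ is the dot action and $\varepsilon=+1$ if $m>0$, $\varepsilon=-1$ if $m<0$.

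Granting Demazure's lemma, the theorem follows by a walk across walls. If $\mu+\rho$ is singular, then --- since every positive root is conjugate under $W$ to a simple root --- there is $v\in W$ with $v(\mu+\rho)$ on a simple-root wall; iterating the lemma along a reduced word for $v$ either terminates with vanishing en route or carries $H^{\bullet}(G/B,\mathcal{L}_{\mu})$ isomorphically onto $H^{\bullet}(G/B,\mathcal{L}_{v\cdot\mu})$, and one further Demazure step for the simple root defining that wall has $m=0$, forcing $H^{\bullet}(G/B,\mathcal{L}_{v\cdot\mu})=0$; in all cases $H^{\bullet}(G/B,\mathcal{L}_{\mu})=0$. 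If $\mu+\rho$ is regular of index $p$, I would move its chamber to the dominant chamber one wall at a time; each crossing lowers the index by one and, by the lemma, has $m<0$, hence shifts cohomological degree down by one, so after $p=\ell(w)$ crossings one reaches the dominant weight $w\cdot\mu$, to which the base case applies, giving $H^{i}(G/B,\mathcal{L}_{\mu})=0$ for $i\ne p$ and $H^{p}(G/B,\mathcal{L}_{\mu})\cong H^{0}(G/B,\mathcal{L}_{w\cdot\mu})=V_{w(\mu+\rho)-\rho}$. The main obstacle is Demazure's lemma itself: the fibrewise analysis of $R^{\bullet}(p_{i})_{*}\mathcal{L}_{\mu}$ on the $\mathbb{P}^{1}$-bundle and, above all, the identification matching the ``$m>0$'' and ``$m<0$'' sides up to the cohomological shift; once that is available one must still track carefully the degree shifts along a reduced expression and the normalization conventions (left versus right actions, $\mathcal{L}_{\mu}$ versus $\mathcal{L}_{-\mu}$) that decide whether $H^{p}$ emerges as $V_{w(\mu+\rho)-\rho}$ or its dual. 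The remaining inputs --- fibrewise Borel--Weil for the reduction to $G/B$, Peter--Weyl and Kodaira vanishing for the base case, and $W$-conjugacy of roots to simple roots --- are standard.
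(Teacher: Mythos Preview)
The paper does not prove this theorem at all: it is stated as a classical result, with the subsequent remark citing Demazure \cite{Dem} for the Borel case and Ottaviani \cite{Ott} for the reduction from a general parabolic to a Borel. Your sketch is precisely Demazure's argument (reduction to $G/B$ via fibrewise Borel--Weil and the Leray spectral sequence, then wall-crossing along $\mathbb{P}^{1}$-fibrations), so your approach coincides with the reference the paper invokes rather than with any proof in the paper itself; the outline is correct, with the caveats about Demazure's lemma and sign/duality conventions that you already flag.
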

\begin{rmk}\ In \cite{Dem}, a parabolic subgroup is stated in the case of Borel subgroup. However, it is known that it is sufficient to consider such cases (See \cite{Ott}).
\end{rmk}
\begin{proof}[Proof of Theorem \ref{Thm}] We examine these conditions by using Lemma \ref{Lem2}.
\begin{lem}\ $\lambda+\rho-t\varpi_{k}$ is regular of index $0$ if and only if $t<1$.
\end{lem}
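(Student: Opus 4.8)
The plan is to unwind the definition of ``regular of index $0$'' for the shifted weight $\lambda+\rho-t\varpi_{k}$ and to translate it into a condition on $t$ via the sets $\Phi^{+}_{G}$ and $\Phi^{+}_{k,G}$. Recall that $\lambda+\rho-t\varpi_{k}$ is regular of index $0$ precisely when $(\lambda+\rho-t\varpi_{k},\alpha)>0$ for every $\alpha\in\Phi^{+}_{G}$. So the first step is to split $\Phi^{+}_{G}$ into the roots $\alpha$ with $(\varpi_{k},\alpha)=0$ and the roots $\alpha\in\Phi^{+}_{k,G}$ with $(\varpi_{k},\alpha)=c_{\alpha,k}>0$, and to handle each family separately.

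For $\alpha$ with $(\varpi_{k},\alpha)=0$ the term $t\varpi_{k}$ drops out, so we must show $(\lambda+\rho,\alpha)>0$ for all such $\alpha$, independently of $t$. Here I would use that $\lambda$ is a dominant weight for the semisimple part $S_{P_{\alpha_{k}}}$ (equivalently $a_i\ge 0$ for $i\ne k$, as recorded after Lemma~\ref{lem}) together with the fact that $\rho$, being the sum of all fundamental weights, is strongly dominant; writing $\alpha=\sum_{i\ne k}m_i\alpha_i$ with all $m_i\ge 0$ (and not all zero), the pairing $(\lambda+\rho,\alpha)$ is a nonnegative combination of the strictly positive numbers $(\rho,\alpha_i)$ plus a nonnegative contribution from $\lambda$, hence is strictly positive. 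Thus the condition on this family of roots is automatic.

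For $\alpha\in\Phi^{+}_{k,G}$ we have $(\lambda+\rho-t\varpi_{k},\alpha)=(\lambda+\rho,\alpha)-t\,c_{\alpha,k}$, and since $c_{\alpha,k}>0$ this is positive exactly when $t<\tfrac{1}{c_{\alpha,k}}(\lambda+\rho,\alpha)$, i.e. $t$ is strictly less than the corresponding element of $T^{G}_{\lambda,k}$. Requiring this for all $\alpha\in\Phi^{+}_{k,G}$ means $t<\min T^{G}_{\lambda,k}$. So it remains to identify this minimum: I would argue that the minimal value of $\tfrac{1}{c_{\alpha,k}}(\lambda+\rho,\alpha)$ over $\alpha\in\Phi^{+}_{k,G}$ equals $1$, attained at $\alpha=\alpha_{k}$ (for which $c_{\alpha_k,k}=(\varpi_k,\alpha_k)$ and $(\lambda+\rho,\alpha_k)=(\lambda,\alpha_k)+(\rho,\alpha_k)$ with $(\lambda,\alpha_k)\ge 0$ since $E_\lambda$ is initialized hence $a_k=0$, and $(\rho,\alpha_k)=(\varpi_k,\alpha_k)$). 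Combined with the general bound $(\lambda+\rho,\alpha)\ge (\rho,\alpha)\ge (\varpi_k,\alpha)=c_{\alpha,k}$ for $\alpha\in\Phi^{+}_{k,G}$ (using $\lambda$ dominant on $S_{P_{\alpha_k}}$, $a_k=0$, and $\rho-\varpi_k$ dominant), this shows $\min T^{G}_{\lambda,k}=1$, so regularity of index $0$ is equivalent to $t<1$.

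The main obstacle is the last step: showing cleanly that the minimum of $T^{G}_{\lambda,k}$ is exactly $1$. The inequality $\tfrac{1}{c_{\alpha,k}}(\lambda+\rho,\alpha)\ge 1$ for all $\alpha\in\Phi^{+}_{k,G}$ requires a careful comparison of $(\rho,\alpha)$ with $(\varpi_k,\alpha)$ using that $\rho-\varpi_k=\sum_{i\ne k}\varpi_i$ is still dominant, so $(\rho-\varpi_k,\alpha)\ge 0$ for every positive root $\alpha$, while the contribution of $\lambda$ is nonnegative because $a_k=0$ and $a_i\ge 0$ for $i\ne k$; and attainment at $\alpha_k$ uses $(\lambda,\alpha_k)=0$. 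I expect this to be the crux of the argument, with the two root-family splittings above being essentially bookkeeping. (Strictly speaking one only needs $\min T^{G}_{\lambda,k}=1$, which is slightly weaker than computing the whole set, so I would phrase the proof around that single inequality plus its attainment.)
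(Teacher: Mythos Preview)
Your argument is correct, but it takes a different and more laborious route than the paper. The paper exploits the standard equivalence ``regular of index $0$ $\Longleftrightarrow$ strongly dominant'' directly in the fundamental-weight basis: writing
\[
\lambda+\rho-t\varpi_{k}=\sum_{i\neq k}(a_{i}+1)\varpi_{i}+(a_{k}+1-t)\varpi_{k},
\]
regularity of index $0$ amounts to all coefficients being positive; the conditions $a_{i}+1>0$ for $i\neq k$ are automatic, and $a_{k}+1-t>0$ reduces to $t<1$ because $a_{k}=0$ by the initialization lemma. That is the whole proof.

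Your approach instead works root-by-root: you split $\Phi^{+}_{G}$ into $\Phi^{+}_{G}\setminus\Phi^{+}_{k,G}$ and $\Phi^{+}_{k,G}$, dispose of the first family, and then reduce the second to the inequality $t<\min T^{G}_{\lambda,k}$, which you identify as $1$ via the estimate $(\lambda+\rho,\alpha)\ge(\rho,\alpha)\ge(\varpi_{k},\alpha)=c_{\alpha,k}$ (using that $\lambda$ and $\rho-\varpi_{k}$ are dominant) together with equality at $\alpha=\alpha_{k}$. This is sound, and it has the minor bonus of explicitly exhibiting $\min T^{G}_{\lambda,k}=1$ (hence $1\in T^{G}_{\lambda,k}$), a fact that is consistent with the shape of the main theorem. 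But it is essentially re-deriving the strongly-dominant criterion from scratch; the paper's coordinate argument in the $\varpi_{i}$-basis gets to $t<1$ in two lines.
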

\begin{proof}
Suppose that $\lambda+\rho-t\varpi_{k}$ is regular of index $0$. Then, $\lambda+\rho-t\varpi_{k}=\sum_{i\neq k}(1+a_{i})\varpi_{i}+(1-t+a_{k})\varpi_{k}$ is strongly dominant, i.e., $a_{i}+1>0$ and $1+a_{k}-t>0$. Now, as $a_{i}\geq0$ for $i \neq k$, we only consider the case $1+a_{k}-t>0$. By initializing (Lemma $2.10$), we obtain $a_{k}=0$. Therefore, $t<1$. Conversely, suppose $t<1$. Using the same argument, $\lambda+\rho-t\varpi_{k}$ is regular of index $0$. Hence, $\lambda+\rho-t\varpi_{k}$ is regular of index $0$ if and only if $t<1$.
\end{proof}
Next, we consider case $(2)$.
\begin{lem}\ $\lambda+\rho-t\varpi_{k}$ is regular of index dim $G/P_{\alpha_{k}}$ if and only if $t>M^{G}_{\lambda,k}$.
\end{lem}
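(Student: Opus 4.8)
The plan is to compute the pairing $(\lambda+\rho-t\varpi_{k},\alpha)$ for every $\alpha\in\Phi^{+}$ and to organize the computation according to whether $\alpha$ lies in $\Phi_{k,G}^{+}$ or not. By the initialization lemma (Lemma~\ref{lem}) we may assume $a_{k}=0$, so $\lambda=\sum_{i\neq k}a_{i}\varpi_{i}$ with all $a_{i}\geq 0$, and $\lambda+\rho-t\varpi_{k}=\sum_{i\neq k}(1+a_{i})\varpi_{i}+(1-t)\varpi_{k}$. Recall also that $|\Phi_{k,G}^{+}|=\dim G/P_{\alpha_{k}}$, as noted in the Preliminaries; this identification is what ties the index condition to the set $\Phi_{k,G}^{+}$.

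First I would dispose of the roots $\alpha\in\Phi^{+}\setminus\Phi_{k,G}^{+}$, i.e.\ those with $(\varpi_{k},\alpha)=0$. For such $\alpha$ we have $(\lambda+\rho-t\varpi_{k},\alpha)=(\lambda+\rho,\alpha)$, and since $\lambda$ is dominant and $\rho$ strongly dominant, $(\lambda,\alpha)\geq 0$ and $(\rho,\alpha)>0$; hence $(\lambda+\rho-t\varpi_{k},\alpha)>0$ for every $t$. Thus these roots never produce a negative pairing and never produce a zero pairing, so they contribute neither to the index nor to singularity. For the remaining roots $\alpha\in\Phi_{k,G}^{+}$ one writes $(\lambda+\rho-t\varpi_{k},\alpha)=c_{\alpha,k}\bigl(\tfrac{1}{c_{\alpha,k}}(\lambda+\rho,\alpha)-t\bigr)$ with $c_{\alpha,k}>0$, so this value is strictly negative exactly when $t>\tfrac{1}{c_{\alpha,k}}(\lambda+\rho,\alpha)$ (an element of $T^{G}_{\lambda,k}$), and zero exactly when $t$ equals that element.

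Putting these together: since every negative pairing must come from $\Phi_{k,G}^{+}$, the weight $\lambda+\rho-t\varpi_{k}$ has index at most $|\Phi_{k,G}^{+}|=\dim G/P_{\alpha_{k}}$, with equality precisely when $(\lambda+\rho-t\varpi_{k},\alpha)<0$ for \emph{every} $\alpha\in\Phi_{k,G}^{+}$; and in that case no pairing vanishes, so regularity is automatic. That condition reads $t>\tfrac{1}{c_{\alpha,k}}(\lambda+\rho,\alpha)$ for all $\alpha\in\Phi_{k,G}^{+}$, i.e.\ $t>\max T^{G}_{\lambda,k}=M^{G}_{\lambda,k}$, which also forces $t\notin T^{G}_{\lambda,k}$ and hence non‑singularity. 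This gives the claimed equivalence. I do not expect a genuine obstacle; the only point needing care is the bookkeeping that ``regular of index $p$'' counts the strictly negative pairings, and that this count equals $\dim G/P_{\alpha_{k}}$ only when all of $\Phi_{k,G}^{+}$ contributes — which rests on the earlier identification $|\Phi_{k,G}^{+}|=\dim G/P_{\alpha_{k}}$.
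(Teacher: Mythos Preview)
Your proof is correct and follows essentially the same route as the paper's: both arguments rest on writing $(\lambda+\rho-t\varpi_{k},\alpha)=(\lambda+\rho,\alpha)-c_{\alpha,k}t$, using $|\Phi_{k,G}^{+}|=\dim G/P_{\alpha_{k}}$, and observing that the roots outside $\Phi_{k,G}^{+}$ always pair positively so that the full index can only be achieved when every $\alpha\in\Phi_{k,G}^{+}$ contributes a negative pairing, i.e.\ when $t>M^{G}_{\lambda,k}$. The paper records the positivity for $\alpha\in\Phi^{+}\setminus\Phi_{k,G}^{+}$ in the surrounding argument rather than inside this lemma, whereas you fold it in; also, the appeal to Lemma~\ref{lem} to set $a_{k}=0$ is harmless but not actually needed here, since for such $\alpha$ one has $(\varpi_{k},\alpha)=0$ and hence the $a_{k}$-term drops out anyway.
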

\begin{proof}
It suffices to prove that $\lambda+\rho-t\varpi_{k}$ is regular of index $|\Phi_{k,G}^{+}|$ if and only if $t>M^{G}_{\lambda,k}$. Note that $(\lambda+\rho-t\varpi_{k},\alpha)=(\lambda+\rho,\alpha)-c_{\alpha,k}\cdot t$ for $\alpha\in\Phi_{k,G}^{+}$. Suppose that $t>M^{G}_{\lambda,k}$. By the definition of $M^{G}_{\lambda,k}$, the pairing value $(\lambda+\rho-t\varpi_{k},\alpha)$ is negative for all $\alpha \in \Phi_{k,G}^{+}$. As the cardinality of $T_{\lambda,k}^{G}$ is equal to the cardinality of $\Phi_{k,G}^{+}$, $\lambda+\rho-t\varpi_{k}$ is regular of index $|\Phi_{k,G}^{+}|$. Conversely, if $\lambda+\rho-t\varpi_{k}$ is regular of index $|\Phi_{k,G}^{+}|$, then the pairing value $(\lambda+\rho-t\varpi_{k},\alpha)$ is negative for all $\alpha \in \Phi_{k,G}^{+}$. In particular, $t>M^{G}_{\lambda,k}$.
\end{proof}
Therefore, $E_{\lambda}$ being an ACM bundle is equivalent to $\lambda+\rho-t\varpi_{k}$ being singular for all integers $t \in [1,M^{G}_{\lambda,k}]$ (i.e., there exists a positive root $\alpha$ such that $(\lambda+\rho-t\varpi_{k},\alpha)=0$ for all $t \in [1,M^{G}_{\lambda,k}]$). If $\alpha$ is in $\Phi^{+}_{G}\setminus\Phi_{k,G}^{+}$, then the pairing value $(\lambda+\rho-t\varpi_{k},\alpha)=(\lambda+\rho,\alpha)$. As $a_{i}\geq0$ and $(\rho,\alpha)>0$, this is positive. Hence, we consider only the case in which $\alpha$ is in $\Phi_{k,G}^{+}$.
\begin{lem}\ $\lambda+\rho-t\varpi_{k}$ is singular for any $t \in [1,M^{G}_{\lambda,k}]$ if and only if $t\in T_{\lambda,k}^{G}$ for all $t \in [1,M^{G}_{\lambda,k}]$.
\end{lem}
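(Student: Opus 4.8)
The plan is to unwind the definition of singularity in terms of the set $T^{G}_{\lambda,k}$. By the discussion immediately preceding the statement, we already know that for $t\in[1,M^{G}_{\lambda,k}]$, the weight $\lambda+\rho-t\varpi_{k}$ is singular if and only if there exists a positive root $\alpha$ with $(\lambda+\rho-t\varpi_{k},\alpha)=0$, and moreover such an $\alpha$ must lie in $\Phi^{+}_{k,G}$ rather than in $\Phi^{+}_{G}\setminus\Phi^{+}_{k,G}$ (since on the latter the pairing value equals $(\lambda+\rho,\alpha)>0$). So the content to be proved is purely the equivalence: there exists $\alpha\in\Phi^{+}_{k,G}$ with $(\lambda+\rho-t\varpi_{k},\alpha)=0$ if and only if $t\in T^{G}_{\lambda,k}$.

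First I would write, for $\alpha\in\Phi^{+}_{k,G}$, the identity $(\lambda+\rho-t\varpi_{k},\alpha)=(\lambda+\rho,\alpha)-c_{\alpha,k}t$, which was already recorded in the proof of the previous lemma. Since $\alpha\in\Phi^{+}_{k,G}$ means precisely $c_{\alpha,k}=(\varpi_{k},\alpha)\neq0$, we may divide: $(\lambda+\rho-t\varpi_{k},\alpha)=0$ is equivalent to $t=\frac{1}{c_{\alpha,k}}(\lambda+\rho,\alpha)$. For the forward direction, if $\lambda+\rho-t\varpi_{k}$ is singular with witnessing root $\alpha$, then (as noted) $\alpha\in\Phi^{+}_{k,G}$, and the displayed equivalence gives $t=\frac{1}{c_{\alpha,k}}(\lambda+\rho,\alpha)\in T^{G}_{\lambda,k}$ by the very definition of $T^{G}_{\lambda,k}$. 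Conversely, if $t\in T^{G}_{\lambda,k}$, then by definition $t=\frac{1}{c_{\alpha,k}}(\lambda+\rho,\alpha)$ for some $\alpha\in\Phi^{+}_{k,G}$, so $(\lambda+\rho-t\varpi_{k},\alpha)=0$ and $\lambda+\rho-t\varpi_{k}$ is singular. Quantifying over all $t\in[1,M^{G}_{\lambda,k}]$ yields the statement.

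I do not expect a genuine obstacle here: the lemma is essentially a bookkeeping step that repackages the singularity condition into membership in $T^{G}_{\lambda,k}$, and all the nontrivial input (the fact that a witnessing root must be in $\Phi^{+}_{k,G}$, and the linear identity for the pairing) has already been established in the preceding paragraph and lemma. The only minor point to be careful about is to state the equivalence "for any $t\in[1,M^{G}_{\lambda,k}]$" at the level of individual $t$ first and then conclude the "for all" version, so that the quantifier structure matches how the lemma is phrased. Combining this lemma with the earlier reduction (ACM $\iff$ $\lambda+\rho-t\varpi_{k}$ singular for all $t\in[1,M^{G}_{\lambda,k}]$) then immediately gives that $E_{\lambda}$ is ACM if and only if $[1,M^{G}_{\lambda,k}]\subseteq T^{G}_{\lambda,k}$, and since $M^{G}_{\lambda,k}$ is the maximal element of $T^{G}_{\lambda,k}\cap\mathbb{Z}$ this is exactly the condition $T^{G}_{\lambda,k}\cap\mathbb{Z}=[1,M^{G}_{\lambda,k}]$ of Theorem \ref{Thm}, completing the proof.
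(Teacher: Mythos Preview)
Your proposal is correct and follows essentially the same route as the paper's own proof: use the identity $(\lambda+\rho-t\varpi_{k},\alpha)=(\lambda+\rho,\alpha)-c_{\alpha,k}t$ for $\alpha\in\Phi^{+}_{k,G}$, divide by the nonzero $c_{\alpha,k}$, and read off that singularity at $t$ is equivalent to $t\in T^{G}_{\lambda,k}$. The paper presents exactly these two directions with the same computation; your extra remarks about the quantifier structure and the passage to Theorem~\ref{Thm} are accurate elaborations but add nothing new to the argument.
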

\begin{proof} If $\lambda+\rho-t\varpi_{k}$ is singular for any $t \in [1,M^{G}_{\lambda,k}]$, there exists a positive root $\alpha$ in $\Phi_{k,G}^{+}$ such that
$$0=(\lambda+\rho-t\varpi_{k},\alpha)=(\lambda+\rho,\alpha)-c_{\alpha,k}\cdot t.$$
As $\alpha$ is in $\Phi_{k,G}^{+}$, $c_{\alpha,k}$ is not equal to zero. Thus, $t=\frac{1}{c_{\alpha,k}}(\lambda+\rho,\alpha)$. By the definition of $T_{\lambda,k}^{G}$, $t$ lies is in it. Conversely, if $t$ is in $T_{\lambda,k}^{G}$ for any $t \in [1,M^{G}_{\lambda,k}]$, there exists a positive root $\alpha$ in $\Phi_{k,G}^{+}$ such that $t=\frac{1}{c_{\alpha,k}}(\lambda+\rho,\alpha)$. Then,
$$0=(\lambda+\rho,\alpha)-c_{\alpha,k}\cdot t=(\lambda+\rho-t\varpi_{k},\alpha).$$
Therefore, $\lambda+\rho-t\varpi_{k}$ is singular for any $t \in [1,M^{G}_{\lambda,k}]$.
\end{proof}
Therefore, in summary, $E_{\lambda}$ is an ACM vector bundle if and only if $T_{\lambda,k}^{G}\cap\mathbb{Z}=[1,M^{G}_{\lambda,k}]$.
\end{proof}

From now on, we denote $n_{l}:=\#\{\alpha\in \Phi_{k,G}^{+} |\ \frac{1}{c_{\alpha,k}}(\lambda+\rho,\alpha)=l\}$. Then the main theorem can be rephrased as follows. Let $E_{\lambda}$ be an initialized irreducible homogeneous vector bundle over $G/P_{\alpha_{k}}$ with highest  weight $\lambda$. Then, $E_{\lambda}$ is an ACM vector bundle if and only if $n_{l}\geq1$ for any integer $l\in[1,M^{G}_{\lambda,k}]$.

In the following subsections, we present some observations on homogeneous varieties of different types. In particular, we calculate the pairing values of $\lambda+\rho$ with all positive roots and $M_{\lambda,k}^{G}$. 
\subsection{type $E_{6}$}
In this subsection, we focus on homogeneous varieties of type $E_{6}$. We present some notations used in the following three subsections on homogeneous varieties of types $E_{6},\ E_{7}$, and $E_{8}$.

Let $G$ be a simply connected simple Lie group with a Dynkin diagram of type $E_{6},\ E_{7}$, or $E_{8}$ as follows.
\begin{center}
$E_{6}$:\dynkin[label]{E}{6}\ \ \ \ \ \ $E_{7}$:\dynkin[label]{E}{7}\ \ \ \ \ \ $E_{8}$:\dynkin[label]{E}{8}
\end{center}
$\epsilon_{1},\cdots,\epsilon_{n}$ denotes the usual standard basis for $\mathbb{R}^{n}$. The $\mathbb{Z}$-span of this basis is a lattice denoted by $I_{n}$. Let $I^{\prime}_{8}=I_{8}+\mathbb{Z}((\epsilon_{1}+\cdots+\epsilon_{8})/2)$ and $L$ be a subgroup of $I^{\prime}_{8}$ consisting of all elements $\sum c_{i}\epsilon_{i}+\frac{c}{2}(\epsilon_{1}+\cdots+\epsilon_{8})$ for which $\sum c_{i}$ is an even integer. We define $\Phi_{E_{8}}=\{\alpha \in L\ |\ (\alpha,\alpha)=2\}$. $\Phi_{E_{8}}$ consists of the vectors $\pm(\epsilon_{i}\pm\epsilon_{j}),\ i\neq j,$ along with the lesser ones $\frac{1}{2}\sum_{i=1}^{8}(-1)^{k(i)}\epsilon_{i}$ (where $k(i)=0,1$ sum to an even integer). As a base, we take
$$\Delta_{E_{8}}=\{\alpha_{1}:=\frac{1}{2}(\epsilon_{1}-\epsilon_{2}-\epsilon_{3}-\epsilon_{4}-\epsilon_{5}-\epsilon_{6}-\epsilon_{7}+\epsilon_{8}),\ \alpha_{2}:=\epsilon_{1}+\epsilon_{2},\ \alpha_{i}:=\epsilon_{i-1}-\epsilon_{i-2}\ (3\leq i\leq8)\}.$$
$\alpha_{(i_{1},\cdots,i_{j})}\ (1\leq i_{1}<\cdots<i_{j} \leq 7;\ j\ {\rm is\ even.})$ denotes an element in $\Phi_{E_{8}}$ (i.e., the root) such that the coefficients of $\epsilon_{i_{l}}\ (l=1,\cdots,j)$ are $-\frac{1}{2}$, and the remaining elements are $\frac{1}{2}$. Moreover, let $\beta_{i}\ (1\leq i\leq 7)$ denote an element in $\Phi_{E_{8}}$ such that the coefficients of $\epsilon_{i}$ are $\frac{1}{2}$ and those of the remaining are $-\frac{1}{2}$.

Let $V_{6}$ denote the span $\{\alpha_{1},\cdots,\alpha_{6}\}_{\mathbb{R}{\rm -lin}} \subset \mathbb{R}^{8}$. We define $\Phi_{E_{6}}=\Phi_{E_{8}}\cap V_{6}$. $\Phi_{E_{6}}$ consists of the vectors $\pm(\epsilon_{i}\pm\epsilon_{j}),\ (1\leq i<j\leq 5)$ along with the lesser ones $\frac{1}{2}(\epsilon_{8}-\epsilon_{7}-\epsilon_{6}+\sum_{i=1}^{5}(-1)^{k(i)}\epsilon_{i})$ (where $k(i)=0,1$ sum to an odd integer).
As a base, we take
$$\Delta_{E_{6}}:=\{\alpha_{1},\cdots,\alpha_{6}\}.$$
The fundamental weights are as follows.
\begin{eqnarray*}
\varpi_{1}&=&-\frac{2}{3}\epsilon_{6}-\frac{2}{3}\epsilon_{7}+\frac{2}{3}\epsilon_{8}\\
\varpi_{2}&=&\frac{1}{2}(\epsilon_{1}+\epsilon_{2}+\epsilon_{3}+\epsilon_{4}+\epsilon_{5}-\epsilon_{6}-\epsilon_{7}+\epsilon_{8})\\
\varpi_{3}&=&\frac{1}{2}(-\epsilon_{1}+\epsilon_{2}+\epsilon_{3}+\epsilon_{4}+\epsilon_{5})+\frac{5}{6}(-\epsilon_{6}-\epsilon_{7}+\epsilon_{8})\\
\varpi_{4}&=&\epsilon_{3}+\epsilon_{4}+\epsilon_{5}-\epsilon_{6}-\epsilon_{7}+\epsilon_{8}\\
\varpi_{5}&=&\epsilon_{4}+\epsilon_{5}+\frac{2}{3}(-\epsilon_{6}-\epsilon_{7}+\epsilon_{8})\\
\varpi_{6}&=&\epsilon_{5}+\frac{1}{3}(-\epsilon_{6}-\epsilon_{7}+\epsilon_{8})
\end{eqnarray*} 
The dimension of a homogeneous variety $E_{6}/P_{\alpha_{k}}$ is
$$\left\{
\begin{array}{ll}
16 & {\rm if}\ k=1,6\\ 
21& {\rm if}\ k=2\\ 
25& {\rm if}\ k=3,5\\ 
29& {\rm if}\ k=4\\ 
\end{array}
.\right.
$$
The set of positive roots, $\Phi_{E_{6}}^{+}$, is
$$\{\pm\epsilon_{i}+\epsilon_{j}\}_{1\leq i<j\leq 5}\cup\{\alpha_{i,j,6,7}\}_{1\leq i<j\leq 5}\cup\{\alpha_{6,7}\}\cup\{\beta_{i}\}_{1\leq i\leq5}.$$
The total number of positive roots is $36$. We now calculate the pairing values of $\lambda+\rho$ with all positive roots. For any $1\leq i<j\leq5$ and $1\leq l\leq5$, we have
\begin{eqnarray*}
(\lambda+\rho,\epsilon_{i}+\epsilon_{j})&=&\sum_{s=1}^{i}a_{1+s}+\sum_{t=1}^{j-2}a_{3+t}+i+j-2\\
(\lambda+\rho,-\epsilon_{i}+\epsilon_{j})&=&\sum_{t=1}^{j-i}a_{t+1+i}+j-i\\
(\lambda+\rho,\alpha_{6,7})&=&a_{1}+ 2a_{2}+ 2a_{3}+ 3a_{4}+ 2a_{5}+ a_{6}+11\\
(\lambda+\rho,\alpha_{i,j,6,7})&=&a_{1}+ a_{2}+ 2a_{3}+ 3a_{4}+ 2a_{5}+ a_{6}-(\sum_{s=1}^{i-1}a_{s+2}+\sum_{t=1}^{j-2}a_{3+t})+13-i-j\\
(\lambda+\rho,\beta_{l})&=&a_{1}+ a_{3}+ a_{4}+ a_{5}+a_{6}-\sum_{s=1}^{5-l}a_{7-s}+l.
\end{eqnarray*}
Using these calculations and by calculating the pairing values $(\varpi_{k},\alpha)$ for any $\alpha\in\Phi_{k,E_{6}}^{+}$, we can verify that $M_{\lambda,k}^{E_{6}}$ is given by
$$\left\{
\begin{array}{ll}
a_{1}+2a_{2}+2a_{3}+3a_{4}+2a_{5}+a_{6}+11& {\rm if}\ k=1,6\\ 
a_{1}+a_{2}+2a_{3}+3a_{4}+2a_{5}+a_{6}+10& {\rm if}\ k=2\\ 
a_{1}+a_{2}+2a_{3}+2a_{4}+a_{5}+a_{6}+8& {\rm if}\ k=3,5\\ 
a_{1}+a_{2}+a_{3}+a_{4}+a_{5}+a_{6}+6 & {\rm if}\ k=4 
\end{array}
.\right.
$$
Finally, we consider an example of Theorem \ref{Thm} for varieties of type $E_{6}$.
\begin{ex}\ Let $E_{\lambda}$ and $E_{\mu}$ be initialized irreducible homogeneous vector bundles on $E_{6}/P(\alpha_{2})$ with $\lambda=2\varpi_{1}+\varpi_{3}$ and $\mu=\varpi_{4}+\varpi_{5}$. Then, $\Phi_{2,E_{6}}^{+}$ consists of $\epsilon_{i}+\epsilon_{j}$, $\alpha_{i,j,6,7}$, or $\alpha_{6,7}$ for all $1\leq i<j\leq5$. We introduce the following matrix for convenience.
\[\frac{1}{c_{\alpha,k}}
\begin{pmatrix}
0&(\lambda+\rho,\epsilon_{1}+\epsilon_{2}) &(\lambda+\rho,\epsilon_{1}+\epsilon_{3}) &(\lambda+\rho,\epsilon_{1}+\epsilon_{4}) &(\lambda+\rho,\epsilon_{1}+\epsilon_{5})\\
 (\lambda+\rho,\alpha_{1,2 , 6,7})&0&(\lambda+\rho,\epsilon_{2}+\epsilon_{3}) &(\lambda+\rho,\epsilon_{2}+\epsilon_{4}) &(\lambda+\rho,\epsilon_{2}+\epsilon_{5})\\
 (\lambda+\rho,\alpha_{1,3 , 6,7}) &(\lambda+\rho,\alpha_{2,3 , 6,7}) &0&(\lambda+\rho,\epsilon_{3}+\epsilon_{4}) &(\lambda+\rho,\epsilon_{3}+\epsilon_{5})\\
 (\lambda+\rho,\alpha_{1,4 , 6,7})  & (\lambda+\rho,\alpha_{2,4 , 6,7})& (\lambda+\rho,\alpha_{3,4 , 6,7})&0&(\lambda+\rho,\epsilon_{4}+\epsilon_{5})\\
 (\lambda+\rho,\alpha_{1,5 , 6,7})&(\lambda+\rho,\alpha_{2, 5, 6,7})& (\lambda+\rho,\alpha_{3, 5, 6,7})&(\lambda+\rho,\alpha_{4,5 , 6,7})&(\lambda+\rho,\alpha_{6,7})
\end{pmatrix}
\]
where $\alpha$ is denoted by a positive root in $\Phi_{2,E_{6}}^{+}$ contained in the pairing of each matrix element. We identify this matrix with the set $T_{\lambda,2}^{E_{6}}$. We consider $T_{\mu,2}^{E_{6}}$  in the same way. Then, we have
\[
T_{\lambda,2}^{E_{6}}=
\begin{pmatrix}
0&1&2&3&4\\
14&0&4&5&6\\
13&11&0&6&7\\
12&10&9&0&8\\
11&9&8&7&\frac{15}{2}
\end{pmatrix}
,\ T_{\mu,2}^{E_{6}}=
\begin{pmatrix}
0&1&3&5&6\\
15&0&4&6&7\\
13&12&0&8&9\\
11&10&8&0&11\\
10&9&7&5&8
\end{pmatrix}
.\]
For $E_{\lambda}$, we can verify that $M_{\lambda,2}^{E_{6}}=14$ and $n_{l}\geq1$ for any $l\in[1,M_{\lambda,2}^{E_{6}}]$. Therefore, by the aforementioned theorem, $E_{\lambda}$ is an ACM bundle. In addition, we can verify that $M_{\mu,2}^{E_{6}}=15$ and $n_{2}=n_{14}=0$. Therefore, by the aforementioned theorem, $E_{\mu}$ is not an ACM bundle.
\end{ex}
\subsection{type $E_{7}$}
Let $V_{7}$ denote the span $\{\alpha_{1},\cdots,\alpha_{7}\}_{\mathbb{R}{\rm -lin}} \subset \mathbb{R}^{8}$. We define $\Phi_{E_{7}}=\Phi_{E_{8}}\cap V_{7}$. As a set, $\Phi_{E_{7}}$ is
$$\{\pm\epsilon_{i}\pm\epsilon_{j}\}_{1\leq i<j\leq 6}\cup\{\pm(\epsilon_{7}-\epsilon_{8})\}\cup\{\pm\frac{1}{2}(\epsilon_{8}-\epsilon_{7}+\sum_{i=1}^{6}\pm\epsilon_{i})\ |\ {\rm The\ number\ of\ minus\ is\ odd}\}.$$ 
As a base, we take
$$\Delta_{E_{7}}=\{\alpha_{1},\cdots,\alpha_{7}\}.$$
The fundamental weights are as follows:
\begin{eqnarray*}
\varpi_{1}&=&-\epsilon_{7}+\epsilon_{8}\\
\varpi_{2}&=&\frac{1}{2}(\epsilon_{1}+\epsilon_{2}+\epsilon_{3}+\epsilon_{4}+\epsilon_{5}+\epsilon_{6})-\epsilon_{7}+\epsilon_{8}\\
\varpi_{3}&=&\frac{1}{2}(-\epsilon_{1}+\epsilon_{2}+\epsilon_{3}+\epsilon_{4}+\epsilon_{5}+\epsilon_{6}-3\epsilon_{7}+3\epsilon_{8})\\
\varpi_{4}&=&\epsilon_{3}+\epsilon_{4}+\epsilon_{5}+\epsilon_{6}-2\epsilon_{7}+2\epsilon_{8}\\
\varpi_{5}&=&\epsilon_{4}+\epsilon_{5}+\epsilon_{6}-\frac{3}{2}(\epsilon_{7}-\epsilon_{8})\\
\varpi_{6}&=&\epsilon_{5}+\epsilon_{6}-\epsilon_{7}+\epsilon_{8}\\
\varpi_{7}&=&\epsilon_{6}-\frac{1}{2}(\epsilon_{7}-\epsilon_{8}).
\end{eqnarray*}
The set of positive roots, $\Phi_{E_{7}}^{+}$, is
$$\{\pm\epsilon_{i}+\epsilon_{j}\}_{1\leq i<j\leq 6}\cup\{-\epsilon_{7}+\epsilon_{8}\}\cup\{\alpha_{l,i,j,7}\}_{1\leq l< i<j\leq 6}\cup\{\alpha_{i,7}\}_{1\leq i\leq6}\cup\{\beta_{i}\}_{1\leq i\leq6}.$$
The total number of positive roots is $63$. The dimension of a homogeneous variety $E_{7}/P_{\alpha_{k}}$ is
$$\left\{
\begin{array}{ll}
33 & {\rm if}\ k=1\\ 
42& {\rm if}\ k=2\\ 
47& {\rm if}\ k=3\\ 
53& {\rm if}\ k=4\\ 
50& {\rm if}\ k=5\\ 
42& {\rm if}\ k=6\\ 
27& {\rm if}\ k=7\\ 
\end{array}
.\right.
$$
We now calculate the pairing values of $\lambda+\rho$ with all positive roots. For any $1\leq i<j\leq6$, $1\leq i_{1}< i_{2}< i_{3} \leq6$, and $1\leq l\leq6$, we have
\begin{eqnarray*}
(\lambda+\rho,\epsilon_{i}+\epsilon_{j})&=&\sum_{s=1}^{i}a_{1+s}+\sum_{t=1}^{j-2}a_{3+t}+i+j-2\\
(\lambda+\rho,-\epsilon_{i}+\epsilon_{j})&=&\sum_{t=1}^{j-i}a_{t+1+i}+j-i\\
(\lambda+\rho,-\epsilon_{7}+\epsilon_{8})&=& 2a_{1}+2a_{2}+3a_{3}+4a_{4}+3a_{5}+2a_{6}+a_{7}+17\\
(\lambda+\rho,\beta_{l})&=&a_{1}+ a_{3}+ a_{4}+ a_{5}+a_{6}+a_{7}-\sum_{s=1}^{6-l}a_{8-s}+l\\
(\lambda+\rho,\alpha_{l,7})&=&a_{1}+ 2a_{2}+ 3a_{3}+ 4a_{4}+ 3a_{5}+ 2a_{6}+ a_{7}-\sum_{s=1}^{l-1}a_{s+2}+17-l\\
(\lambda+\rho,\alpha_{i_{1},i_{2},i_{3},7})&=&a_{1}+ a_{2}+ 2a_{3}+ 3a_{4}+ 3a_{5}+ 2a_{6}+ a_{7}-\sum_{r=1}^{i_{1}-1}(a_{r+1}+a_{r+2}+a_{r+3})\\
&&-(\sum_{s=1}^{i_{2}-i_{1}-1}a_{s+i_{1}+2}+\sum_{t=1}^{i_{3}-i_{1}-2}a_{t+i_{1}+3})+19-i_{1}-i_{2}-i_{3}.
\end{eqnarray*}
Using these calculations and by calculating $(\varpi_{k},\alpha)$ for any $\alpha\in\Phi_{k,E_{7}}^{+}$, we can verify that $M_{\lambda,k}^{E_{7}}$ is equal to
$$\left\{
\begin{array}{ll}
a_{1}+2a_{2}+3a_{3}+4a_{4}+3a_{5}+2a_{6}+a_{7}+16& {\rm if}\ k=1\\ 
a_{1}+a_{2}+2a_{3}+3a_{4}+3a_{5}+2a_{6}+a_{7}+13& {\rm if}\ k=2\\ 
a_{1}+a_{2}+a_{3}+2a_{4}+2a_{5}+2a_{6}+a_{7}+10& {\rm if}\ k=3\\ 
a_{1}+a_{2}+a_{3}+a_{4}+a_{5}+a_{6}+a_{7}+7& {\rm if}\ k=4\\
a_{1}+a_{2}+2a_{3}+2a_{4}+a_{5}+a_{6}+a_{7}+9& {\rm if}\ k=5\\
a_{1}+2a_{2}+2a_{3}+3a_{4}+2a_{5}+a_{6}+a_{7}+12& {\rm if}\ k=6\\
2a_{1}+2a_{2}+3a_{3}+4a_{4}+3a_{5}+2a_{6}+a_{7}+17 & {\rm if}\ k=7
\end{array}
.\right.
$$
\subsection{type $E_{8}$}
Similar to the cases described in previous subsections, we take the base to be 
$$\Delta_{E_{8}}=\{\alpha_{1}:=\frac{1}{2}(\epsilon_{1}-\epsilon_{2}-\epsilon_{3}-\epsilon_{4}-\epsilon_{5}-\epsilon_{6}-\epsilon_{7}+\epsilon_{8}),\ \alpha_{2}:=\epsilon_{1}+\epsilon_{2},\ \alpha_{i}:=\epsilon_{i-1}-\epsilon_{i-2}\ (3\leq i\leq8)\}.$$
The fundamental weights are as follows.
\begin{eqnarray*}
\varpi_{1}&=&2\epsilon_{8}\\
\varpi_{2}&=&\frac{1}{2}(\epsilon_{1}+\epsilon_{2}+\epsilon_{3}+\epsilon_{4}+\epsilon_{5}+\epsilon_{6}+\epsilon_{7}+5\epsilon_{8})\\
\varpi_{3}&=&\frac{1}{2}(-\epsilon_{1}+\epsilon_{2}+\epsilon_{3}+\epsilon_{4}+\epsilon_{5}+\epsilon_{6}+\epsilon_{7}+7\epsilon_{8})\\
\varpi_{4}&=&\epsilon_{3}+\epsilon_{4}+\epsilon_{5}+\epsilon_{6}+\epsilon_{7}+5\epsilon_{8}\\
\varpi_{5}&=&\epsilon_{4}+\epsilon_{5}+\epsilon_{6}+\epsilon_{7}+4\epsilon_{8}\\
\varpi_{6}&=&\epsilon_{5}+\epsilon_{6}+\epsilon_{7}+3\epsilon_{8}\\
\varpi_{7}&=&\epsilon_{6}+\epsilon_{7}+2\epsilon_{8}\\
\varpi_{8}&=&\epsilon_{7}+\epsilon_{8}
\end{eqnarray*}
The set of positive roots, $\Phi_{E_{8}}^{+}$, is
$$\{\pm\epsilon_{i}+\epsilon_{j}\}_{1\leq i<j\leq 8}\cup\{\alpha_{0,0},\ \alpha_{i_{1},i_{2}}\}_{1\leq i_{1}< i_{2}\leq 7}\cup\{\alpha_{i_{1},i_{2},i_{3},i_{4}}\}_{1\leq i_{1}< i_{2}<i_{3}<i_{4}\leq 7}\cup\{\beta_{i}\}_{1\leq i\leq6},$$
where $\alpha_{0,0}$ is the  root in $\Phi_{E_{8}}$ such that all coefficients of $\epsilon_{i}$ are $\frac{1}{2}$ for $1\leq i \leq 8$. The total number of positive roots is $120$. The dimension of homogeneous variety $E_{8}/P_{\alpha_{k}}$ is
$$\left\{
\begin{array}{ll}
78& {\rm if}\ k=1\\ 
92& {\rm if}\ k=2\\ 
98& {\rm if}\ k=3\\ 
106& {\rm if}\ k=4\\ 
104& {\rm if}\ k=5\\ 
97& {\rm if}\ k=6\\ 
83& {\rm if}\ k=7\\ 
57& {\rm if}\ k=8\\ 
\end{array}
.\right.
$$
We now calculate the pairing values of $\lambda+\rho$ with all positive roots. For any $1\leq i<j\leq7$, $1\leq i_{1}< i_{2}< i_{3} <i_{4}\leq7$, and $1\leq l\leq7$, we have
\begin{eqnarray*}
(\lambda+\rho,\epsilon_{i}+\epsilon_{j})&=&\sum_{s=1}^{i}a_{1+s}+\sum_{t=1}^{j-2}a_{3+t}+i+j-2\\
(\lambda+\rho,\epsilon_{l}+\epsilon_{8})&=& 2a_{1}+3a_{2}+3a_{3}+5a_{4}+4a_{5}+3a_{6}+2a_{7}+a_{8}+\sum_{s=1}^{l-1}a_{s+2}+l+22\\
(\lambda+\rho,-\epsilon_{i}+\epsilon_{j})&=&\sum_{t=1}^{j-i}a_{t+1+i}+j-i\\
(\lambda+\rho,-\epsilon_{l}+\epsilon_{8})&=&2a_{1}+3a_{2}+3a_{3}+5a_{4}+4a_{5}+3a_{6}+2a_{7}+a_{8}-\sum_{t=3}^{l+1}a_{t}+24-l \\
(\lambda+\rho,\beta_{l})&=&a_{1}+ a_{3}+ a_{4}+ a_{5}+a_{6}+a_{7}+a_{8}-\sum_{s=1}^{7-l}a_{9-s}+l\\
(\lambda+\rho,\alpha_{i,j})&=&a_{1}+ 2a_{2}+ 3a_{3}+ 5a_{4}+ 4a_{5}+ 3a_{6}+ 2a_{7}+ a_{8}-\sum_{s=1}^{i-1}(a_{s+2}+a_{s+3})-\sum_{t=i+3}^{j+1}a_{t}+24-i-j\\
(\lambda+\rho,\alpha_{0,0})&=&a_{1}+ 3a_{2}+ 3a_{3}+ 5a_{4}+ 4a_{5}+ 3a_{6}+ 2a_{7}+ a_{8}+22\\
\end{eqnarray*}
\begin{eqnarray*}
(\lambda+\rho,\alpha_{i_{1},i_{2},i_{3},i_{4}})&=&a_{1}+ a_{2}+ 2a_{3}+ 3a_{4}+ 3a_{5}+ 3a_{6}+ 2a_{7}+ a_{8}-\sum_{q=1}^{i_{1}-1}(a_{q+2}+a_{q+3}+a_{q+4}+a_{q+5})\\
&-&\sum_{r=1}^{i_{2}-i_{1}-1}(a_{r+i_{1}+2}+a_{r+i_{1}+3}+a_{r+i_{1}+4})-(\sum_{s=1}^{i_{3}-i_{2}-1}a_{s+i_{2}+2}+\sum_{t=1}^{i_{4}-i_{2}-2}a_{t+i_{2}+3})\\
&+&26-i_{1}-i_{2}-i_{3}-i_{4}.
\end{eqnarray*}
Using these calculations and by calculating $(\varpi_{k},\alpha)$ for any $\alpha\in\Phi_{k,E_{8}}^{+}$, we can verify that $M_{\lambda,k}^{E_{8}}$ is given by
$$
\left\{
\begin{array}{ll}
a_{1}+ 3a_{2}+ 3a_{3}+ 5a_{4}+ 4a_{5}+ 3a_{6}+ 2a_{7}+ a_{8}+22& {\rm if}\ k=1\\ 
a_{1}+ a_{2}+ 2a_{3}+ 3a_{4}+ 3a_{5}+ 3a_{6}+ 2a_{7}+ a_{8}+16& {\rm if}\ k=2\\ 
a_{1}+ a_{2}+ a_{3}+ 2a_{4}+ 2a_{5}+ 2a_{6}+ 2a_{7}+ a_{8}+12& {\rm if}\ k=3\\ 
a_{1}+ a_{2}+ a_{3}+ a_{4}+ a_{5}+ a_{6}+ a_{7}+ a_{8}+8 & {\rm if}\ k=4\\ 
a_{1}+ a_{2}+ 2a_{3}+ 2a_{4}+ a_{5}+ a_{6}+ a_{7}+ a_{8}+10 & {\rm if}\ k=5\\ 
a_{1}+ 2a_{2}+ 2a_{3}+ 3a_{4}+2a_{5}+ a_{6}+ a_{7}+ a_{8}+13 & {\rm if}\ k=6\\ 
2a_{1}+ 2a_{2}+ 3a_{3}+ 4a_{4}+3a_{5}+ 2a_{6}+ a_{7}+ a_{8}+18 & {\rm if}\ k=7\\ 
2a_{1}+ 3a_{2}+ 4a_{3}+ 6a_{4}+5a_{5}+ 4a_{6}+ 3a_{7}+ 2a_{8}+29 & {\rm if}\ k=8\\ 
\end{array}
.\right.
$$
\subsection{type $F_{4}$}
Let $G$ be a simply connected simple Lie group with a Dynkin diagram of type $F_{4}$, as follows.
\begin{center}
$F_{4}$:\dynkin[label]{F}{4}
\end{center}

We denote $I^{\prime}_{4}$ by $I_{4}+\mathbb{Z}((\epsilon_{1}+\epsilon_{2}+\epsilon_{3}+\epsilon_{4})/2)$ 
and define $\Phi_{F_{4}}=\{\alpha\in I^{\prime}_{4}\ |\ (\alpha,\alpha)\in \{1,2\}\}$. Then, $\Phi_{F_{4}}$ consists of all $\pm\epsilon_{i}$, all $\pm(\epsilon\pm\epsilon_{j})$, $i\neq j$, and $\pm\frac{1}{2}(\epsilon_{1}\pm\epsilon_{2}\pm\epsilon_{3}\pm\epsilon_{4})$, where the signs may be chosen independently. As a base, we take
$$\Delta_{4}:=\{\alpha_{1}:=\epsilon_{2}-\epsilon_{3},\ \alpha_{2}:=\epsilon_{3}-\epsilon_{4},\ \alpha_{3}:=\epsilon_{4},\ \alpha_{4}:=\frac{1}{2}(\epsilon_{1}-\epsilon_{2}-\epsilon_{3}-\epsilon_{4})\}.$$
The fundamental weights are as follows:
\begin{eqnarray*}
\varpi_{1}&=&\epsilon_{1}+\epsilon_{2}\\
\varpi_{2}&=&2\epsilon_{1}+\epsilon_{2}+\epsilon_{3}\\
\varpi_{3}&=&\frac{1}{2}(3\epsilon_{1}+\epsilon_{2}+\epsilon_{3}+\epsilon_{4})\\
\varpi_{4}&=&\epsilon_{1}.\\
\end{eqnarray*}
The set of positive roots, $\Phi_{F_{4}}^{+}$, is
$$\{\epsilon_{i}\}_{1\leq i \leq 4}\cup\{\epsilon_{i}\pm\epsilon_{j}\}_{1\leq i<j\leq4}\cup\{\alpha_{2,3,4},\alpha_{i,j},\beta_{0},\beta_{i}\}_{2\leq i<j\leq4},$$
where $\beta_{0}$, $\alpha_{2,3,4}$, and $\alpha_{i,j}$ are identical to the notations used for type $E_{8}$, and $\beta_{i}$ denotes the  root such that the coefficient of $\epsilon_{i}$ is $-\frac{1}{2}$ and those of the remaining are $\frac{1}{2}$. The dimension of homogeneous variety $F_{4}/P_{\alpha_{k}}$ is
$$\left\{
\begin{array}{ll}
15 & {\rm if}\ k=1,4\\ 
20& {\rm if}\ k=2,3
\end{array}
.\right.
$$
We now calculate the pairing values of $\lambda+\rho$ with all positive roots. For any $2\leq i<j\leq4$ and $2\leq l\leq4$, we have
\begin{eqnarray*}
(\lambda+\rho,\epsilon_{1})&=&a_{1}+2a_{2}+\frac{3}{2}a_{3}+a_{4}+\frac{11}{2}\\
(\lambda+\rho,\epsilon_{l})&=&a_{1}+a_{2}+\frac{1}{2}a_{3}-\sum_{s=1}^{l-2}a_{s}+\frac{9}{2}-l\\
(\lambda+\rho,\epsilon_{1}+\epsilon_{l})&=&2a_{1}+3a_{2}+2a_{3}+a_{4}-\sum_{s=1}^{l-2}a_{s}+10-l\\
(\lambda+\rho,\epsilon_{i}+\epsilon_{j})&=&a_{1}+2a_{2}+a_{3}-\sum_{s=1}^{j-3}a_{1+s}-\sum_{t=1}^{i-2}a_{t}+9-i-j\\
(\lambda+\rho,-\epsilon_{1}+\epsilon_{l})&=&\sum_{s=1}^{3}a_{s}+\sum_{t=1}^{l-2}a_{t}+l+1\\
(\lambda+\rho,-\epsilon_{2}+\epsilon_{j})&=&\sum_{s=1}^{j-i}a_{s}+j-2\\
(\lambda+\rho,-\epsilon_{3}+\epsilon_{4})&=&a_{2}+1\\
(\lambda+\rho,\beta_{0})&=&a_{1}+2a_{2}+\frac{3}{2}a_{3}+\frac{1}{2}a_{4}+5\\
(\lambda+\rho,\beta_{l})&=&a_{2}+a_{3}+\frac{1}{2}a_{4}+\sum_{t=1}^{i-2}a_{t}+i+\frac{1}{2}\\
(\lambda+\rho,\alpha_{i,j})&=&\frac{1}{2}a_{3}+\frac{1}{2}a_{4}+\sum_{s=1}^{j-3}a_{s+1}+\sum_{t=1}^{i-2}a_{t}+i+j-4\\
(\lambda+\rho,\alpha_{2,3,4})&=&\frac{1}{2}a_{4}+\frac{1}{2}.
\end{eqnarray*}
Using these calculations and by calculating $(\varpi_{k},\alpha)$ for any $\alpha\in\Phi_{k,F_{4}}^{+}$, we can verify that $M_{\lambda,k}^{F_{4}}$ is given by
$$\left\{
\begin{array}{ll}
a_{1}+3a_{2}+2a_{3}+a_{4}+7& {\rm if}\ k=1\\ 
a_{1}+a_{2}+a_{3}+a_{4}+4& {\rm if}\ k=2\\
a_{1}+2a_{2}+a_{3}+a_{4}+5& {\rm if}\ k=3\\
2a_{1}+4a_{2}+3a_{3}+a_{4}+10& {\rm if}\ k=4
\end{array}
.\right.
$$
\subsection{type $G_{2}$}
Let $G$ be a simply connected simple Lie group with a Dynkin diagram of type $G_{2}$, as follows.
\begin{center}
$G_{2}$:\dynkin[label]{G}{2}
\end{center}

Let $V_{G_{2}}$ be a vector subspace of $\mathbb{R}^{3}$ consisting of $\sum_{i=1}^{3}a_{i}\epsilon_{i}$ such that $\sum_{i}^{3}a_{i}=0$. We define $\Phi_{G_{2}}=V_{G_{2}}\cap I_{3}$. As a set, $\Phi_{G_{2}}$ comprises
$$\{\pm(\epsilon_{i}-\epsilon_{j})\ |\ 1\leq i<j\leq3\} \cup \{\pm(2\epsilon_{i}-\epsilon_{j}-\epsilon_{k})\ |\ \{1,2,3\}=\{i,j,k\}\}.$$ 
As a base, we take
$$\Delta_{G_{2}}=\{\alpha_{1}:=\epsilon_{2}-\epsilon_{3},\ \alpha_{2}:=\epsilon_{1}-2\epsilon_{2}+\epsilon_{3}\}.$$
The fundamental weights are as follows:
\begin{eqnarray*}
\varpi_{1}&=&\epsilon_{1}-\epsilon_{3}\\
\varpi_{2}&=&2\epsilon_{1}-\epsilon_{2}-\epsilon_{3}.
\end{eqnarray*}
The set of positive roots, $\Phi_{G_{2}}^{+}$, is
$$\{\epsilon_{i}-\epsilon_{j}\ |\ 1\leq i<j\leq3\}\cup\{-\beta_{1},\beta_{2},\beta_{3}\},$$
where $\beta_{i}\ (1\leq i\leq3)$ is a  root such that the coefficient of $\epsilon_{i}$ is $-2$ and those of the remaining are $1$. The dimension of variety $G_{2}/P_{\alpha_{k}}\ (k=1,2)$ is $5$.
We now calculate the pairing values of $\lambda+\rho$ with all positive roots. For any $1\leq i<j\leq6$, $1\leq i_{1}< i_{2}< i_{3} \leq6$ and $1\leq l\leq6$, we have
\begin{eqnarray*}
(\lambda+\rho,\epsilon_{1}+\epsilon_{2})&=&a_{1}+3a_{2}+4 \\
(\lambda+\rho,\epsilon_{1}+\epsilon_{3})&=&2a_{1}+3a_{2}+5\\
(\lambda+\rho,\epsilon_{2}+\epsilon_{3})&=&a_{1}+1\\
(\lambda+\rho,-\beta_{1})&=&3a_{1}+6a_{2}+9\\
(\lambda+\rho,\beta_{2})&=&3a_{2}+3\\
(\lambda+\rho,\beta_{3})&=&3a_{1}+3a_{2}+6.
\end{eqnarray*}
The set $\Phi_{1,G_{2}}^{+}$ consists of $\epsilon_{i}-\epsilon_{j}$, $-\beta_{1}$, or $\beta_{3}$ for all $1\leq i<j\leq3$. Conversely, the set $\Phi_{2,G_{2}}^{+}$ consists of $\epsilon_{1}-\epsilon_{i}$ or $\beta_{j}$ for all $2\leq i\leq3$ and $1\leq j\leq 3$. We can verify that $M_{\lambda,k}^{G_{2}}$ is given by
$$\left\{
\begin{array}{ll}
a_{1}+ 3a_{2}+4& {\rm if}\ k=1\\ 
a_{1}+a_{2}+2& {\rm if}\ k=2
\end{array}
.\right.
$$
\section{Applications}
In this section, we present certain applications for the main theorem. First, we prove a common claim for varieties of each type and then describe corresponding applications.

\begin{proof}[Proof of Corollary \ref{Cor}]Let $E_{\lambda}$ be an irreducible homogeneous vector bundle with $\lambda=\sum_{i=1}^{n}a_{i}\varpi_{i}$. We assume that $a_{k}=0$, without the loss of generality. As the number of elements in $T_{\lambda,k}$, which is not equal to zero, is equal to dim $G/P_{\alpha_{k}}$, if $M^{G}_{\lambda,k}>{\rm dim}\ G/P_{\alpha_{k}}$, there exists an integer $l \in[1,M^{G}_{\lambda,k}]$ such that $n_{l}=0$. By the main theorem, $E_{\lambda}$ is not an ACM bundle. Therefore, if $E_{\lambda}$ is an ACM bundle, then $M^{G}_{\lambda,k}\leq {\rm dim}\ G/P_{\alpha_{k}}$. As $M^{G}_{\lambda,k}$ is a linear combination of $a_{i}$ and $a_{i}\geq0$, there exist only a finite number of choices for $a_{i}$. Therefore, there exist only a finite number of irreducible homogeneous ACM bundles.
\end{proof}

In the subsequent subsections, we decide on the choice of $a_{i}$ for particular varieties.
\subsection{For the Cayley Planes}
Using Theorem \ref{Thm}, we determine initialized irreducible homogeneous ACM bundles on the Cayley Plane $E_{6}/P_{\alpha_{1}}\cong E_{6}/P_{\alpha_{6}}$.
\begin{cor}\label{Cor2} Let $E_{\lambda}$ be an initialized irreducible homogeneous vector bundle over the Cayley Plane $E_{6}/P(\alpha_{1})$ with $\lambda=\sum_{i=1}^{6}a_{i}\varpi_{i}$. Then, $E_{\lambda}$ is an ACM bundle if and only if $(a_{1},a_{2},a_{3},a_{4},a_{5},a_{6})$ is equal to equals $(0,0,0,0,i,j)$, where $i=0,1$ and $j=0,1,2,3$.
\end{cor}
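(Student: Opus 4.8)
The plan is to apply the main theorem (Theorem \ref{Thm}) directly: an initialized irreducible homogeneous bundle $E_\lambda$ on $E_6/P_{\alpha_1}$ is ACM if and only if $n_l\geq 1$ for every integer $l\in[1,M^{E_6}_{\lambda,1}]$, where by the computation in the type $E_6$ subsection $M^{E_6}_{\lambda,1}=a_1+2a_2+2a_3+3a_4+2a_5+a_6+11$. Since $E_\lambda$ is initialized, Lemma \ref{lem} gives $a_1=0$ (here $k=1$). So I must decide for which tuples $(0,a_2,a_3,a_4,a_5,a_6)$ with $a_i\geq 0$ the set $T^{E_6}_{\lambda,1}\cap\mathbb Z$ equals $[1,M^{E_6}_{\lambda,1}]$.

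First I would set up the relevant data. The set $\Phi^+_{1,E_6}$ consists of those positive roots $\alpha$ with $(\varpi_1,\alpha)\neq 0$; using the explicit list of $\Phi^+_{E_6}$ and the formula $\varpi_1=-\tfrac23\epsilon_6-\tfrac23\epsilon_7+\tfrac23\epsilon_8$, one checks that these are exactly the roots of the form $\alpha_{i,j,6,7}$ $(1\leq i<j\leq 5)$, the root $\alpha_{6,7}$, and the roots $\beta_l$ $(1\leq l\leq 5)$ — a total of $10+1+5=16=\dim E_6/P_{\alpha_1}$, as it must be. For each such $\alpha$ I would record $c_{\alpha,1}=(\varpi_1,\alpha)$ (these come out to a common value up to the normalization, so the entries of $T^{E_6}_{\lambda,1}$ are the pairings $(\lambda+\rho,\alpha)$ divided by that constant) and the pairing $(\lambda+\rho,\alpha)$ from the displayed formulas for $(\lambda+\rho,\alpha_{i,j,6,7})$, $(\lambda+\rho,\alpha_{6,7})$, $(\lambda+\rho,\beta_l)$. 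The key point is that, because $a_1=0$, each of these $16$ values becomes an explicit affine-linear function of $(a_2,a_3,a_4,a_5,a_6)$.

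Next comes the heart of the argument: a finiteness reduction followed by a finite check. By (the proof of) Corollary \ref{Cor}, if $E_\lambda$ is ACM then $M^{E_6}_{\lambda,1}\leq\dim E_6/P_{\alpha_1}=16$, i.e. $2a_2+2a_3+3a_4+2a_5+a_6\leq 5$. This bounds all $a_i$ and leaves only finitely many candidate tuples. For each surviving tuple I would compute the $16$ entries of $T^{E_6}_{\lambda,1}$ and test whether every integer in $[1,M^{E_6}_{\lambda,1}]$ is hit. I expect that the tuples with $a_2=a_3=a_4=0$ — i.e. $\lambda=a_5\varpi_5+a_6\varpi_6$ with $2a_5+a_6\leq 5$, so $a_5\in\{0,1,2\}$ and correspondingly $a_6\leq 5-2a_5$ — are the only ones even worth examining carefully, since turning on $a_2,a_3$ or $a_4$ tends to create a gap: the largest entries near $M^{E_6}_{\lambda,1}$ (governed by $\alpha_{6,7}$ and $\alpha_{1,j,6,7}$) jump by the $a_2,a_3,a_4$ contributions while the small-$l$ entries do not move in step. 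Carrying out the check, the tuples that survive are exactly $(0,0,0,0,i,j)$ with $i\in\{0,1\}$, $j\in\{0,1,2,3\}$; in particular $a_5=2$ already fails, and among $a_5\leq 1$ the bound forces $j\leq 3$ when $i=1$ and one verifies $j=4,5$ still fail the no-gap condition when $i=0$ while $j=0,1,2,3$ all work.

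The main obstacle is purely computational bookkeeping: writing down the $16$ pairing values as functions of $(a_2,\dots,a_6)$, normalizing by $c_{\alpha,1}$ correctly, and then — for each of the finitely many tuples allowed by $M^{E_6}_{\lambda,1}\leq 16$ — verifying the covering condition $T^{E_6}_{\lambda,1}\cap\mathbb Z=[1,M^{E_6}_{\lambda,1}]$ without arithmetic slips. There is no conceptual difficulty beyond Theorem \ref{Thm} and the finiteness bound from Corollary \ref{Cor}; the argument is a finite case analysis, and the only care needed is to be systematic about which integers in the interval are realized and to exhibit, for each asserted ACM tuple, the explicit root achieving each value $l\in[1,M^{E_6}_{\lambda,1}]$.
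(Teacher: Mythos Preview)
Your proposal is correct and would work if carried out. The identification of $\Phi^+_{1,E_6}$ and of $c_{\alpha,1}$ is right (indeed $\varpi_1$ is minuscule for $E_6$, so $c_{\alpha,1}=1$ for all sixteen roots), and the finiteness bound $M^{E_6}_{\lambda,1}\le 16$, i.e.\ $2a_2+2a_3+3a_4+2a_5+a_6\le 5$, correctly reduces the problem to a finite exhaustive check which yields exactly the stated eight tuples.

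The paper organizes the case analysis differently. Rather than invoking the global bound from Corollary~\ref{Cor} and brute-forcing the surviving tuples, it eliminates the coefficients one at a time by locating the smallest ``gap'' value: it shows $a_3\ge 1\Leftrightarrow n_2=0$, then (with $a_3=0$) $a_4\ge 1\Leftrightarrow n_3=0$, then (with $a_3=a_4=0$) $a_2\ge 1$ and $a_5\ge 1$ together $\Leftrightarrow n_4=0$, and finally disposes of the two remaining one-parameter families. Your approach is more mechanical and arguably easier to execute (or automate), since once the sixteen linear forms are tabulated the check is routine; the paper's approach is more informative, since at each step it pinpoints \emph{which} integer $l$ is missed and \emph{why}, and it does not need the a~priori bound on $M^{E_6}_{\lambda,1}$. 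Either route is acceptable; your one remaining obligation is to actually display the sixteen values of $(\lambda+\rho,\alpha)$ (with $a_1=0$) and run the finite verification, including the borderline failures $a_5=2$, $(a_5,a_6)=(0,4),(0,5)$, and any tuple with $a_2$, $a_3$, or $a_4$ positive.
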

\begin{proof}
By Lemma \ref{lem}, we may assume that $a_{1}=0$. We proceed using the following four steps.

Step $1.$ We prove that $a_{3}\geq1$ if and only if $E_{\lambda}$ is not an ACM bundle. As per Theorem \ref{Thm}, proving that $a_{3}\geq1$ if and only if integer $l \in[1,2a_{2}+2a_{3}+3a_{4}+2a_{5}+a_{6}+11]$ exists such that $n_{l}=0$ is sufficient. Based on $a_{i}\geq 0$ for $2\leq i \leq 6$, note that $2a_{2}+2a_{3}+3a_{4}+2a_{5}+a_{6}+11\geq 11$. In particular, $2$ is contained in $[1,2a_{2}+2a_{3}+3a_{4}+2a_{5}+a_{6}+11]$. In addition, note that $a_{3}+2$ has degree one with respect to variables $a_{i}$ in $T_{\lambda,1}^{E_{6}}$ and the smallest constant term  (henceforth, such an element is called the minimum element with respect to $a_{i}$). Thus, $a_{3}\geq1$ if and only if $n_{2}=0$.

Step $2.$ Based on step $1$, for $E_{\lambda}$ to be an ACM bundle, we assume that $a_{3}=0$. We then prove that $a_{4}\geq 1$ if and only if $E_{\lambda}$ is not an ACM bundle. As the minimum element with respect to $a_{i}$ in $T_{\lambda,1}^{E_{6}}$ is $a_{4}+3$, the argument used in step $1$ implies that $a_{4}\geq1$ if and only if $n_{3}=0$.

Step $3.$ Based on steps $1$ and $2$, let $a_{3}$ and $a_{4}$ be zero. In this step, we prove that $a_{2}\geq 1$ and $a_{5}\geq 1$ if and only if $E_{\lambda}$ is not an ACM bundle. The minimum element with respect to $a_{i}$ in $T_{\lambda,1}^{E_{6}}$ is either $a_{5}+4$ or $a_{2}+4$. Therefore, we have $a_{2}\geq 1$ and $a_{5}\geq 1$ if and only if $n_{4}=0$.

Step $4.$ Based on the three previous steps, considering the following cases is sufficient for $E_{\lambda}$ to be an ACM bundle.

Case $1.$ $a_{1}=a_{2}=a_{3}=a_{4}=0$. In this case, we obtain $a_{5}\geq2$ if and only if $n_{5}=0$. If $a_{5}=i$ ($i=0$ or $1$), we obtain $a_{6}\geq4$ if and only if $n_{8+i}=0$. Conversely, under this condition, if $0\leq a_{6}\leq 3$, then $n_{l}\geq1$ for all $l\in[1,11+2i+a_{6}]$.

Case $2.$ $a_{1}=a_{3}=a_{4}=a_{5}=0$. In this case, we have $a_{2}\geq2$ and $a_{6}\geq1$ if and only if $n_{5}=0$. The case where $a_{2}$ is zero has just been considered. If $a_{2}$ is one, $M_{\lambda,1}^{E_{6}}$ is equal to $a_{6}+13$. However, the element in $T_{\lambda,1}^{E_{6}}$ with the second-largest constant term after $M_{\lambda,1}^{E_{6}}$ is $a_{6}+11$. As all coefficients of $a_{6}$ are equal, there is no nonnegative integer, $a_{6}$, satisfying condition $n_{l}\geq1$ for all $l\in[1,a_{6}+13]$. Finally, we consider the case in which $a_{6}=0$ under this condition. As the coefficients of $M_{\lambda,1}^{E_{6}}$, which are equal to $2a_{2}+11$ and $a_{2}+10$, are different and all coefficients of $a_{2}$ in $T_{\lambda,1}^{E_{6}}$ except $M_{\lambda,1}^{E_{6}}$ are identical, if $a_{2}\geq1$ then there exists integer $l\in[1,2a_{2}+11]$ such that $n_{l}=0$. Therefore, in this case, $E_{\lambda}$ is an ACM bundle if and only if $a_{2}=a_{6}=0$.
\end{proof}
\subsection{For the Freudenthal varieties}
We consider the Freudenthal variety, $E_{7}/P_{\alpha_{7}}$. We derive a condition for the highest weight for an irreducible homogeneous vector bundle on this variety to be an ACM bundle.
\begin{cor} Let $E_{\lambda}$ be an initialized irreducible homogeneous vector bundle over the Freudenthal variety $E_{7}/P_{\alpha_{7}}$ with $\lambda=\sum_{i=1}^{7}a_{i}\varpi_{i}$. Then, $E_{\lambda}$ is an ACM bundle if and only if $(a_{1},a_{2},a_{3},a_{4},a_{5},a_{6},a_{7})$ is equal to $(0,i,0,0,0,0,0)$, where $i=0,1,2$.
\end{cor}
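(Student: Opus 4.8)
The plan is to mirror the proof of Corollary~\ref{Cor2}, exploiting that for $E_7/P_{\alpha_7}$ the coefficients $c_{\alpha,7}$ are all equal. By Lemma~\ref{lem} we may assume $a_7=0$. Since $\varpi_7=\epsilon_6-\tfrac12(\epsilon_7-\epsilon_8)$, one sees that $c_{\alpha,7}=(\varpi_7,\alpha)=1$ for each of the $27$ roots of $\Phi^{+}_{7,E_7}$, namely $\pm\epsilon_i+\epsilon_6$ $(1\le i\le 5)$, $-\epsilon_7+\epsilon_8$, $\alpha_{l,i,j,7}$ $(1\le l<i<j\le 5)$, $\alpha_{l,7}$ $(1\le l\le 5)$ and $\beta_6$; equivalently, the coefficient of $a_7$ in each of the corresponding pairing formulas recorded in the preceding subsection equals $1$. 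Hence $T^{E_7}_{\lambda,7}$ is exactly the set of values $(\lambda+\rho,\alpha)$, the number $n_\ell$ counts the roots with $(\lambda+\rho,\alpha)=\ell$, and by the reformulation of Theorem~\ref{Thm} the bundle $E_\lambda$ is ACM if and only if every integer of $[1,M]$ is of this form, where $M:=M^{E_7}_{\lambda,7}=2a_1+2a_2+3a_3+4a_4+3a_5+2a_6+17$ (using $a_7=0$). All the values I need are read off from the formulas already displayed.

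Then I would peel off the coefficients one at a time, just as in Steps~1--3 of Corollary~\ref{Cor2}. The roots $-\epsilon_i+\epsilon_6$ (for $i=5,4,3,2,1$) give the pairing values $1$, $a_6+2$, $a_5+a_6+3$, $a_4+a_5+a_6+4$, $a_3+a_4+a_5+a_6+5$, while every other pairing value has constant term $\ge 5$. As $2,3,4\in[1,M]$, inspecting successively $\ell=2$, $\ell=3$, $\ell=4$ forces $a_6=0$, then $a_5=0$, then $a_4=0$ whenever $E_\lambda$ is ACM. With $a_4=a_5=a_6=a_7=0$ fixed, the value $5$ is attained only by $-\epsilon_1+\epsilon_6$ (value $a_3+5$) or by $\epsilon_1+\epsilon_6$ (value $a_2+5$), so $\ell=5$ forces $a_2=0$ or $a_3=0$.

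It remains to treat the two surviving families. If $a_3=0$, so $\lambda=a_1\varpi_1+a_2\varpi_2$, then collating the $27$ pairing values shows that $T^{E_7}_{\lambda,7}$ is the union of $\{1,\dots,5\}$, $\{a_2+5,\dots,a_2+9\}$, $\{a_1+6\}$, $\{a_1+7,\dots,a_1+10\}$, $\{a_1+a_2+9,\dots,a_1+a_2+13\}$, $\{a_1+2a_2+12,\dots,a_1+2a_2+16\}$ and $\{2a_1+2a_2+17\}$; an elementary check of when these arithmetic progressions cover $[1,2a_1+2a_2+17]$ with no missing integer shows this happens exactly for $a_1=0$ and $a_2\le 2$ (for instance, if $a_1\ge1$ one of $6$, $a_1+17$, $a_1+19$ is skipped, and if $a_1=0$, $a_2\ge 3$ then $17$ is skipped). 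If instead $a_2=0$, so $\lambda=a_1\varpi_1+a_3\varpi_3$, the analogous listing of values shows that $[1,2a_1+3a_3+17]$ is covered with no gap only for $a_1=a_3=0$ (for $a_3\ge 2$ the integer $6$ is missing, for $a_3=1$ the integer $18$ is missing, and $a_1\ge1$ is excluded as before). Combining the two cases, $E_\lambda$ is ACM exactly when $(a_1,\dots,a_7)=(0,i,0,0,0,0,0)$ with $i\in\{0,1,2\}$.

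The ``minimum element'' steps are routine. The real work --- and the only delicate point --- is the last paragraph: one must correctly assemble all $27$ pairing values (several of which coincide, and whose $\varpi_2$- and $\varpi_3$-coefficients depend on the index pattern of the root --- e.g.\ for $\alpha_{i_1,i_2,i_3,7}$ the $\varpi_3$-coefficient is $2$, $1$ or $0$ according as $i_1=1$, $i_1=2$ or $i_1\ge 3$) --- into contiguous blocks and verify precisely which remaining weights leave no integer of $[1,M^{E_7}_{\lambda,7}]$ uncovered; this bookkeeping, rather than any conceptual obstacle, is the crux.
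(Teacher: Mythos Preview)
Your proof is correct and follows essentially the same route as the paper: reduce to $a_4=a_5=a_6=a_7=0$ by inspecting $n_2,n_3,n_4$, split into $a_2=0$ or $a_3=0$ via $n_5$, then list the $27$ pairing values (all with $c_{\alpha,7}=1$, as $E_7/P_{\alpha_7}$ is cominuscule) and check which remaining weights cover $[1,M^{E_7}_{\lambda,7}]$ without gaps. The only differences are organizational---the paper handles the branch $a_2=0$ first and inserts an extra $n_6$ reduction before tabulating values---and two of your parenthetical ``missing integer'' examples (``$17$ is skipped'' for $a_2\ge 3$, ``$18$ is missing'' for $a_3=1$) are literally correct only at the smallest parameter value (in general the gaps are at $a_2+14$ and $a_1+18$), but this does not affect the argument.
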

\begin{proof}
As in the case of Corollary \ref{Cor2}, we assume that $a_{4}=a_{5}=a_{6}=a_{7}=0$ for $E_{\lambda}$ to be an ACM bundle. Then, we obtain $a_{2}\geq 1$ and $a_{3} \geq 1$ if and only if $n_{5}=0$. Therefore, we consider two cases: $a_{2}=0$ and $a_{3}=0$.

When $a_{2}=0$, we have $a_{3}\geq 2$ if and only if $n_{6}=0$. For $E_{\lambda}$ to be an ACM bundle, we consider the cases $a_{3}=0$ and $a_{3}=1$. First, when $a_{3}=1$, $T^{E_{7}}_{\lambda,7}$ consists of $i$, $a_{1}+j$ or $2a_{1}+20$, where $1\leq i\leq 10$ and $j\in\{7,8,\cdots,17,19\}$. In other words, $a_{1}+18$ does not belong to $T^{E_{7}}_{\lambda,7}$. Therefore, there exists no nonnegative integer $a_{1}$ such that $E_{\lambda}$ is an ACM bundle in this case. When $a_{3}=0$, $T^{E_{7}}_{\lambda,7}$ consists of $i$, $a_{1}+j$, or $2a_{1}+17$, where $1\leq i\leq 9$ and $6\leq j\leq 16$. $M^{E_{7}}_{\lambda,7}$ is $2a_{1}+17$. As $M^{E_{7}}_{\lambda,7}$ and other elements in $T^{E_{7}}_{\lambda,7}$ have different coefficients of $a_{1}$ and all coefficients of $a_{1}$ in $T^{E_{7}}_{\lambda,1}$ except $M^{E_{7}}_{\lambda,7}$ are identical, if $a_{1}\geq 1$, then there exists an integer $l \in[1,M_{\lambda,k}]$ such that $n_{l}=0$. Conversely, if $a_{1}$ is equal to zero, $n_{l}\geq1$ for any $l\in[1,17]$.

We consider the case when $a_{3}=0$. Then, we obtain $a_{2}\geq 2$ and $a_{1} \geq 1$ if and only if $n_{6}=0$. For $E_{\lambda}$ to be ACM bundle, considering the cases $a_{2}=0$, $a_{2}=1$, and $a_{1}=0$ is sufficient. For $a_{2}=0$ and $a_{2}$=1, $a_{1}$ has only zero  by a similar argument to that in the previous case. When $a_{1}=0$, we have $a_{2} \geq 3$ if and only if $n_{7}=0$. Then, $E_{\lambda}$ is an ACM bundle if $a_{2}$ is $0, 1$, or $2$.

Therefore, $E_{\lambda}$ is an ACM bundle if and only if $a_{i}=0$ for $i\neq2$ and $a_{2}$ is $0, 1$, or $2$.
\end{proof}
\subsection{For type $E_{8}$}
In this subsection, we classify ACM bundles on homogeneous varieties of type $E_{8}$. In particular, we focus on the case where the parabolic subgroup is defined by $\alpha_{8}$.
\begin{cor} Let $E_{\lambda}$ be an initialized irreducible homogeneous vector bundle over $E_{8}/P_{\alpha_{8}}$ with $\lambda=\sum_{i=1}^{8}a_{i}\varpi_{i}$. Then $E_{\lambda}$ is ACM bundle if and only if $(a_{1},a_{2},a_{3},a_{4},a_{5},a_{6},a_{7},a_{8})$ is equal to $(i,0,0,0,0,0,0,0)$, $(i,0,1,0,0,0,0,0)$, $(j,1,0,0,0,0,0,0)$, or $(0,k,0,0,0,0,0,0)$, where $0\leq i\leq 5$, $1\leq j\leq4$ and $1\leq k \leq 2$.
\end{cor}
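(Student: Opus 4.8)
The plan is to follow the same strategy used in the proofs of Corollary~\ref{Cor2} and the Freudenthal case: by Lemma~\ref{lem} we may assume $a_{8}=0$, and then we want to determine which tuples $(a_{1},\dots,a_{7})$ of nonnegative integers make $T^{E_{8}}_{\lambda,8}\cap\mathbb{Z}=[1,M^{E_{8}}_{\lambda,8}]$, where by the subsection on type $E_{8}$ we have $M^{E_{8}}_{\lambda,8}=2a_{1}+3a_{2}+4a_{3}+6a_{4}+5a_{5}+4a_{6}+3a_{7}+29$. The key input is an explicit description of the set $\Phi^{+}_{8,E_{8}}$ (the roots $\alpha$ with $(\varpi_{8},\alpha)\neq 0$) together with the corresponding coefficients $c_{\alpha,8}$, obtained from $\varpi_{8}=\epsilon_{7}+\epsilon_{8}$ and the list of positive roots; feeding the pairing formulas $(\lambda+\rho,\alpha)$ from that subsection into the quotient $\tfrac{1}{c_{\alpha,8}}(\lambda+\rho,\alpha)$ produces the list of elements of $T^{E_{8}}_{\lambda,8}$ as explicit linear functions of the $a_{i}$ with explicit constant terms.

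First I would isolate, for each $j$ with $j\notin\{8\}$, the ``minimum element with respect to $a_{j}$'' of $T^{E_{8}}_{\lambda,8}$ — i.e.\ the element that is linear (degree one) in the $a_i$'s with the smallest constant term — and argue, exactly as in Step~1 of Corollary~\ref{Cor2}, that if the constant $c$ of such a minimum element satisfies $c\le M^{E_{8}}_{\lambda,8}$ (which holds since $M^{E_{8}}_{\lambda,8}\ge 29$) then $a_{j}\ge 1$ forces $n_{c}=0$ and hence $E_{\lambda}$ is not ACM. Running this successively I expect to conclude that $a_{4}=a_{5}=a_{6}=a_{7}=0$ is necessary, and then, after setting those to zero, that further small constants (around $5,6,7,\dots$) force $a_{3}$ to be small and impose coupled constraints such as ``$a_{2}\ge 1$ and $a_{3}\ge 1$ cannot both hold'' or ``$a_{1}\ge 1$ and $a_{2}\ge 2$ cannot both hold.'' This case-reduction narrows the possibilities to the four listed families: $a_{1}$ free with everything else zero; $a_{3}=1$ with $a_{1}$ free; $a_{2}=1$ with $a_{1}$ free; $a_{2}$ free with everything else zero.

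Second, for each of these four surviving families I would verify the two directions separately. For the upper bounds on the free parameter ($i\le 5$, $j\le 4$, $k\le 2$), the argument is the one used repeatedly above: when $a_{2}=0=a_{3}$ and only $a_{1}>0$, the element $M^{E_{8}}_{\lambda,8}=2a_{1}+29$ has coefficient $2$ on $a_{1}$ while every other element of $T^{E_{8}}_{\lambda,8}$ has coefficient $0$ or $1$ on $a_{1}$, so once $a_{1}$ is large enough the arithmetic progression $[1,2a_{1}+29]$ develops a gap; pinning down the exact threshold ($a_{1}\le 5$) requires listing the finitely many constant terms occurring and checking that for $a_{1}\in\{0,\dots,5\}$ every integer in $[1,2a_{1}+29]$ is hit but not for $a_{1}=6$. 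The same mechanism (comparison of the coefficient of the free variable in $M$ against its coefficient in the ``second largest'' element) handles the $a_{2}=1$ and $a_{3}=1$ branches, and the pure-$a_{2}$ branch. For the converse — that each listed tuple really is ACM — one exhibits explicitly, for each integer $l\in[1,M^{E_{8}}_{\lambda,8}]$, a root $\alpha\in\Phi^{+}_{8,E_{8}}$ with $\tfrac{1}{c_{\alpha,8}}(\lambda+\rho,\alpha)=l$, which is a finite (parametrized) check.

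The main obstacle I anticipate is bookkeeping rather than conceptual: $\Phi^{+}_{E_{8}}$ has $120$ elements, and extracting $\Phi^{+}_{8,E_{8}}$ (of cardinality $57=\dim E_{8}/P_{\alpha_{8}}$) with the right $c_{\alpha,8}\in\{1,2\}$ and the right constant terms, then organizing the resulting $57$ linear forms so that the ``minimum element with respect to $a_j$'' and the ``second-largest constant term'' claims are transparent, is delicate and error-prone. I would mitigate this by tabulating $T^{E_{8}}_{\lambda,8}$ in a structured array (as in the worked $E_{6}$ example), grouping the entries by which $\epsilon_i+\epsilon_8$, $-\epsilon_i+\epsilon_8$, $\beta_l$, $\alpha_{i,j}$, $\alpha_{0,0}$, $\alpha_{i_1,i_2,i_3,i_4}$ they come from, so that the monotonicity in each $a_i$ can be read off directly and the gap-detection arguments reduce to comparing a handful of constant terms.
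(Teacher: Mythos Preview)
Your plan is correct and follows essentially the same route as the paper's proof: both first force $a_{4}=a_{5}=a_{6}=a_{7}=0$ (and $a_{8}=0$ by initialization) via successive ``minimum element'' arguments, then observe that $a_{2}\ge 1$ and $a_{3}\ge 1$ cannot both hold (the paper records this as $n_{6}=0$), branch into the cases $a_{3}=0$ and $a_{2}=0$, and in each branch bound the remaining free variable by locating the first gap in $T^{E_{8}}_{\lambda,8}$. The only cosmetic difference is that the paper pins down the thresholds by naming the specific missing value $n_{l}=0$ (e.g.\ $n_{12}=0$ for $a_{1}\ge 6$ when $a_{2}=a_{3}=0$, and an additional check $n_{29+i}=0$ in the $a_{2}=1$ subcase), whereas you phrase the same obstruction as a coefficient comparison between $M^{E_{8}}_{\lambda,8}$ and the next-largest element; these are equivalent bookkeeping devices.
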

\begin{proof}
As in Corollary \ref{Cor2}, we assume that $a_{8}=a_{7}=a_{6}=a_{5}=a_{4}=0$ for $E_{\lambda}$ to be an ACM bundle. Then, we obtain $a_{2}\geq1$ and $a_{3}\geq1$ if and only if $n_{6}=0$. Thus, it is sufficient to consider the following cases.

Firstly, we consider the case when $a_{3}=0$. Then, we have $a_{2}\geq2$ and $a_{1}\geq1$ if and only if $n_{7}=0$. 
\begin{itemize}
\item If $a_{2}=0$, then $a_{1}\geq6$ if and only if $n_{12}=0$. If $0\leq a_{1}\leq 5$, we can confirm that $n_{l}\geq1$ for all $l\in [1,M_{\lambda,8}^{E_{8}}]$. 
\item In the case when $a_{2}=1$, $a_{1}\geq7$ if and only if $n_{13}=0$. However in this case, if $a_{1}=5+i$ ($i=0$ or $1$) then $n_{29+i}=0$. On the other hand, if $0\leq a_{1}\leq4$, then $n_{l}\geq1$ for all $l\in [1,M_{\lambda,8}^{E_{8}}]$.
\item When $a_{1}=0$, $a_{2}\geq3$ if and only if $n_{8}=0$. If $0\leq a_{2}\leq 2$, we can verify that $n_{l}\geq1$ for all $l\in [1,M_{\lambda,8}^{E_{8}}]$. 
\end{itemize}

In the case when $a_{2}=0$, we obtain $a_{3}\geq2$ if and only if $n_{7}=0$. We have already considered the case when $a_{3}$ is equal to zero. Therefore, we now only consider the case in which $a_{3}$ is equal to one. Thus, we have $a_{1}\geq6$ if and only if $n_{13}=0$. If $0\leq a_{1}\leq5$, then we can verify that $n_{l}\geq1$ for all $l\in [1,M_{\lambda,8}^{E_{8}}]$.
\end{proof}
\subsection{For type $F_{4}$}
In this subsection, we classify ACM bundles on homogeneous varieties of type $F_{4}$. In particular, we consider the case in which a parabolic subgroup is defined by $\alpha_{1}$. We are able to classify initialized, irreducible, homogeneous ACM bundles on this variety.
\begin{cor} Let $E_{\lambda}$ be an initialized irreducible homogeneous vector bundle over $F_{4}/P_{\alpha_{1}}$ with $\lambda=\sum_{i=1}^{4}a_{i}\varpi_{i}$. $E_{\lambda}$ is an ACM bundle if and only if $(a_{1},a_{2},a_{3},a_{4})$ is equal to $(0,0,0,i)$ or $(0,0,1,j)$, where $0\leq i\leq4,\ j=0,1,2,3,5$.
\end{cor}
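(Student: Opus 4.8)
The plan is to follow the same strategy used in Corollary 4.2 (the Cayley plane) and Corollary 4.4 (type $E_8$): by Lemma \ref{lem} we may assume $a_1=0$ (note the parabolic is $\alpha_1$, so the vanishing coefficient is the first one). We then whittle down the admissible weights coefficient by coefficient, using the explicit description of $M^{F_4}_{\lambda,1}=a_1+3a_2+2a_3+a_4+7$ and the list of pairing values $(\lambda+\rho,\alpha)$ for $\alpha\in\Phi^{+}_{1,F_4}$ computed in subsection 3.5. Throughout, the criterion is the rephrased main theorem: $E_\lambda$ is ACM iff $n_l\geq 1$ for every integer $l\in[1,M^{F_4}_{\lambda,1}]$, where $n_l=\#\{\alpha\in\Phi^+_{1,F_4}\mid \tfrac{1}{c_{\alpha,1}}(\lambda+\rho,\alpha)=l\}$.

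First I would identify $\Phi^{+}_{1,F_4}$: these are the positive roots $\alpha$ with $(\varpi_1,\alpha)\neq 0$, i.e. the $15$ roots accounting for $\dim F_4/P_{\alpha_1}=15$; I would tabulate the $15$ normalized values $\tfrac{1}{c_{\alpha,1}}(\lambda+\rho,\alpha)$ from the formulas in subsection 3.5. Next, as in Step 1–3 of Corollary 4.2, I would locate the \emph{minimum element with respect to the $a_i$} in $T^{F_4}_{\lambda,1}$ — the degree-one linear form in the $a_i$ with the smallest constant term — and argue that if its $a_i$-variable has positive coefficient then the corresponding small integer value $l$ is skipped (has $n_l=0$) while still lying in $[1,M^{F_4}_{\lambda,1}]$ since $M^{F_4}_{\lambda,1}\geq 7$. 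This forces $a_2=0$ (ruling out the $3a_2$ contribution to $M$, which grows too fast relative to the other entries) and then $a_3\le 1$; I expect the precise thresholds to read: some $n_l=0$ unless $a_2=0$, and then $a_3\geq 2$ forces a gap, leaving $a_3\in\{0,1\}$.

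Then I would split into the two surviving cases. Case $a_3=0$: here $M^{F_4}_{\lambda,1}=a_4+7$ and the remaining entries of $T^{F_4}_{\lambda,1}$ are the fixed integers $1,\dots$ together with one $a_4$-dependent value; checking which integers in $[1,a_4+7]$ are hit gives exactly $0\le a_4\le 4$. Case $a_3=1$: now $M^{F_4}_{\lambda,1}=a_4+9$, and one must check that the combinatorics of the shifted values still covers $[1,a_4+9]$; this is where the exceptional value $j=5$ appears — for $a_4=4$ there is a skipped integer, but for $a_4=5$ the top value $M=14$ and the second-largest are arranged so that coverage is restored, while $a_4\ge 6$ fails again. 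The main obstacle will be this last bookkeeping in the $a_3=1$ branch: one has to verify by hand that the value $l=M^{F_4}_{\lambda,1}-1$ (and every other integer below $M$) is attained, which requires carefully matching the $\alpha_{i,j}$, $\beta_i$, $\epsilon_i\pm\epsilon_j$ contributions at small constant terms, exactly the kind of finite but delicate case analysis that distinguishes $j=5$ from $j=4$. Once all four coefficients are pinned down, assembling the stated list $(0,0,0,i)$ with $0\le i\le 4$ and $(0,0,1,j)$ with $j\in\{0,1,2,3,5\}$ completes the proof.
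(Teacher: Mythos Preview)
Your proposal is correct and follows essentially the same route as the paper: assume $a_1=0$ by Lemma~\ref{lem}, force $a_2=0$, then $a_3\le 1$, and finish by a finite check on $a_4$ in each of the two branches. One small correction to your heuristic in the $a_3=1$ branch: the skipped integer for $a_4=4$ (and also for $a_4\ge 6$) is $l=6$, not $l=M^{F_4}_{\lambda,1}-1$; the paper phrases this as ``$a_4\ge 6$ iff $n_6=0$'' together with the separate observation that $n_6=0$ also when $a_4=4$, so your planned exhaustive check of all $l\in[1,a_4+9]$ will find it, just not where you guessed.
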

\begin{proof}
For $E_{\lambda}$ to be an ACM bundle, we may assume that $a_{1}=a_{2}=0$. Then, $a_{3}\geq 2$ if and only if $n_{3}=0$. Thus, considering the cases of $a_{3}=0$ and $a_{3}=1$ is sufficient.

When $a_{3}=0$, we have $a_{4}\geq5$ if and only if $n_{5}=0$. If $a_{4}$ belongs to the set $[0,4]$, then $n_{l}\geq1$ for all $l\in[1,a_{4}+7]$.

For $a_{3}=1$, we obtain $a_{4}\geq6$ if and only if $n_{6}=0$. If $a_{4}$ is $4$, $n_{6}$ is equal to zero. Conversely, if $a_{4}$ is equal to $0$, $1$, $2$, $3$, or $5$, we obtain $n_{l}\geq1$ for $l\in[1,a_{4}+9]$.
\end{proof}
\subsection{For type $G_{2}$}
Finally, we consider homogeneous varieties of type $G_{2}$. In these cases, we can obtain simple formulas for an initialized irreducible homogeneous vector bundle on such varieties to be an ACM bundle. 
\begin{cor} Let $E_{\lambda}$ be an initialized irreducible homogeneous vector bundle over $G_{2}/P_{\alpha_{1}}$ with $\lambda=a_{1}\varpi_{1}+a_{2}\varpi_{2}$. Then, $E_{\lambda}$ is an ACM vector bundle if and only if $a_{1}=a_{2}=0$.
\end{cor}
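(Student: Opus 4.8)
The plan is to apply Theorem \ref{Thm} directly, using the explicit data computed in subsection 3.5. By Lemma \ref{lem} we may assume that the coefficient of $\varpi_1$ vanishes is \emph{not} forced, but initialization only gives $a_1 = 0$? No — for $G_2/P_{\alpha_1}$ the relevant simple root is $\alpha_1$, so Lemma \ref{lem} forces $a_1 = 0$. Hence $\lambda = a_2\varpi_2$ with $a_2 \geq 0$, and it remains to show $E_\lambda$ is ACM if and only if $a_2 = 0$ as well. We know from subsection 3.5 that $\Phi^{+}_{1,G_2}$ consists of the roots $\epsilon_i - \epsilon_j$ ($1\le i<j\le 3$), together with $-\beta_1$ and $\beta_3$, so $|\Phi^{+}_{1,G_2}| = 5 = \dim G_2/P_{\alpha_1}$, and $M^{G_2}_{\lambda,1} = a_1 + 3a_2 + 4 = 3a_2 + 4$.

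First I would list the five values $\frac{1}{c_{\alpha,1}}(\lambda+\rho,\alpha)$ for $\alpha \in \Phi^{+}_{1,G_2}$, using the pairing formulas in subsection 3.5 specialized to $a_1 = 0$ and to $\lambda + \rho = \varpi_1 + (a_2+1)\varpi_2$. Concretely: $(\lambda+\rho,\epsilon_1 - \epsilon_2)$, $(\lambda+\rho,\epsilon_1-\epsilon_3)$, $(\lambda+\rho,\epsilon_2-\epsilon_3)$, $(\lambda+\rho,-\beta_1)$, and $(\lambda+\rho,\beta_3)$, each divided by the corresponding $c_{\alpha,1} = (\varpi_1,\alpha)$. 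The pairing formulas are stated in terms of $\epsilon_i + \epsilon_j$ rather than $\epsilon_i - \epsilon_j$, but for $G_2$ one has $\epsilon_1 + \epsilon_2 = -(\epsilon_1 - \epsilon_3) \cdot(-1)$ type identities since $\epsilon_1+\epsilon_2+\epsilon_3 = 0$ on $V_{G_2}$; so I would first rewrite each positive root $\epsilon_i - \epsilon_j$ in the form $\epsilon_a + \epsilon_b$ and then read off the value. After simplification the set $T^{G_2}_{\lambda,1} \cap \mathbb{Z}$ will be a short explicit list of affine-linear expressions in $a_2$; I expect it to contain $1$, $3a_2 + 4 = M^{G_2}_{\lambda,1}$, and a couple of intermediate values whose gaps grow with $a_2$.

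The decisive step is then purely arithmetic: check that $T^{G_2}_{\lambda,1}\cap\mathbb{Z} = [1, 3a_2+4]$ holds precisely when $a_2 = 0$. When $a_2 = 0$ one verifies the five values are exactly $\{1,2,3,4\}$ together with the top value $4$ (with one repetition), so the set of integers is $[1,4] = [1,M^{G_2}_{\lambda,1}]$ and $E_\lambda$ is ACM. When $a_2 \geq 1$, the maximum $M^{G_2}_{\lambda,1} = 3a_2+4 \geq 7$ grows linearly, but only five roots are available, so by the counting argument in the proof of Corollary \ref{Cor} (the number of nonzero entries of $T^{G_2}_{\lambda,1}$ equals $\dim G_2/P_{\alpha_1} = 5 < 3a_2 + 4$ once $a_2 \geq 1$) there must be an integer $l \in [1, 3a_2+4]$ with $n_l = 0$; hence $E_\lambda$ is not ACM. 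I do not anticipate a genuine obstacle here — the only thing requiring care is the bookkeeping conversion between the $\epsilon_i - \epsilon_j$ description of $\Phi^{+}_{1,G_2}$ and the $\epsilon_i + \epsilon_j$ normalization in the pairing formulas, and confirming the value of $c_{\alpha,1}$ for the long roots $-\beta_1$ and $\beta_3$.
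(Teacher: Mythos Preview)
Your approach is essentially the same as the paper's: use Lemma \ref{lem} to set $a_1=0$, identify $M^{G_2}_{\lambda,1}=3a_2+4$, then for $a_2\ge 1$ invoke the counting argument that $|T^{G_2}_{\lambda,1}|=5<3a_2+4$, so some $n_l=0$; and for $a_2=0$ check directly. The paper does exactly this, only it spells out the set explicitly as $T^{G_2}_{\lambda,1}=\{3a_2+4,\ \tfrac12(3a_2+5),\ 2a_2+3,\ 1,\ a_2+2\}$ rather than leaving the computation as a plan.

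One small correction: in the $a_2=0$ case the five values are $\{4,\tfrac52,3,1,2\}$, so the integer part is $[1,4]$ not because of a repetition but because the fifth value $\tfrac52$ is non-integral. Also, your detour through rewriting $\epsilon_i-\epsilon_j$ as $\epsilon_a+\epsilon_b$ via $\epsilon_1+\epsilon_2+\epsilon_3=0$ is fine but unnecessary once you simply read off the explicit set above from the pairing values already tabulated in subsection~3.5.
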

\begin{proof}
Based on initialization (i.e., $a_{1}=0$), set $T^{G_{2}}_{\lambda,1}$ is $\{3a_{2}+4,\ \frac{1}{2}(3a_{2}+5),\ 2a_{2}+3,\ 1,\ a_{2}+2\}$.
If $a_{2}\geq 1$, then $3a_{2}+4 \geq 7$. As there are only five equations in $T^{G_{2}}_{\lambda,1}$, there exists an integer $l \in [1,3a_{2}+4]$ such that $n_{l}=0$. By Theorem \ref{Thm}, $E_{\lambda}$ is not an ACM vector bundle. Therefore, $E_{\lambda}$ is an ACM vector bundle if and only if $a_{1}=a_{2}=0$.
\end{proof}

\begin{cor}\ Let $E_{\lambda}$ be an initialized irreducible homogeneous vector bundle over $G_{2}/P_{\alpha_{2}}$ with $\lambda=a_{1}\varpi_{1}+a_{2}\varpi_{2}$. Then, $E_{\lambda}$ is an ACM vector bundle if and only if $(a_{1},a_{2})$ is equal to $(i,0)$, where $i=0,1,2$.
\end{cor}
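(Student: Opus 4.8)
The plan is to mimic the short argument used for $G_{2}/P_{\alpha_{1}}$ in the preceding corollary: write down $T^{G_{2}}_{\lambda,2}$ and $M^{G_{2}}_{\lambda,2}$ explicitly in terms of $(a_{1},a_{2})$, use the cardinality of $\Phi^{+}_{2,G_{2}}$ to bound $a_{1}$ a priori, and then dispose of the few remaining values of $a_{1}$ by hand via the reformulation of Theorem \ref{Thm} (namely that $E_{\lambda}$ is ACM if and only if $n_{l}\geq 1$ for every integer $l\in[1,M^{G_{2}}_{\lambda,2}]$).

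First, by Lemma \ref{lem} we may assume $a_{2}=0$, so $\lambda=a_{1}\varpi_{1}$. Since $(\varpi_{2},\alpha_{1})=0$, the set $\Phi^{+}_{2,G_{2}}$ consists of exactly the five positive roots $\epsilon_{1}-\epsilon_{2}$, $\epsilon_{1}-\epsilon_{3}$, $-\beta_{1}$, $\beta_{2}$, $\beta_{3}$. Pairing each with $\varpi_{2}$ gives $c_{\alpha,2}=3$ for all of them except $c_{-\beta_{1},2}=6$; feeding this together with the pairing formulas for $(\lambda+\rho,\alpha)$ recorded above (and $a_{2}=0$) yields $T^{G_{2}}_{\lambda,2}=\{\tfrac{a_{1}+4}{3},\ \tfrac{2a_{1}+5}{3},\ \tfrac{a_{1}+3}{2},\ 1,\ a_{1}+2\}$. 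A direct comparison of these five quantities shows $a_{1}+2$ is the largest, so $M^{G_{2}}_{\lambda,2}=a_{1}+2$, in agreement with the formula listed for type $G_{2}$.

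Now $|\Phi^{+}_{2,G_{2}}|=\dim G_{2}/P_{\alpha_{2}}=5$, hence $\sum_{l}n_{l}=5$; therefore if $a_{1}\geq 4$ then $M^{G_{2}}_{\lambda,2}=a_{1}+2>5$ and some $l\in[1,a_{1}+2]$ has $n_{l}=0$, so $E_{\lambda}$ is not ACM. It remains to examine $a_{1}\in\{0,1,2,3\}$ directly. For $a_{1}=0$ one has $T^{G_{2}}_{\lambda,2}\cap\mathbb{Z}=\{1,2\}=[1,2]$; for $a_{1}=1$ the entry $\tfrac{a_{1}+3}{2}$ equals $2$, so $T^{G_{2}}_{\lambda,2}\cap\mathbb{Z}=\{1,2,3\}=[1,3]$; for $a_{1}=2$ the entries $\tfrac{a_{1}+4}{3}$ and $\tfrac{2a_{1}+5}{3}$ equal $2$ and $3$, so $T^{G_{2}}_{\lambda,2}\cap\mathbb{Z}=\{1,2,3,4\}=[1,4]$; by Theorem \ref{Thm} the bundle is ACM in these three cases. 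For $a_{1}=3$, however, neither $\tfrac{7}{3}$ nor $\tfrac{11}{3}$ is an integer while $\tfrac{a_{1}+3}{2}=3$, so $T^{G_{2}}_{\lambda,2}\cap\mathbb{Z}=\{1,3,5\}\neq[1,5]$ and $E_{\lambda}$ is not ACM. Combined with the assumption $a_{2}=0$ this gives exactly the stated list $(a_{1},a_{2})=(i,0)$ with $i\in\{0,1,2\}$.

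The only point requiring genuine care is the boundary value $a_{1}=3$: the crude counting bound does not exclude it, and one has to observe that the two non-integral entries of $T^{G_{2}}_{\lambda,2}$ fail to become integers precisely at $a_{1}=3$, leaving the integers $2$ and $4$ uncovered. Everything else is a routine evaluation once $T^{G_{2}}_{\lambda,2}$ has been written down from the pairing data already established for type $G_{2}$.
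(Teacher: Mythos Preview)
Your proof is correct and follows essentially the same route as the paper: both compute $T^{G_{2}}_{\lambda,2}=\{\tfrac{a_{1}+4}{3},\tfrac{2a_{1}+5}{3},\tfrac{a_{1}+3}{2},1,a_{1}+2\}$ after setting $a_{2}=0$ via Lemma~\ref{lem}, use the cardinality bound $|\Phi^{+}_{2,G_{2}}|=5$ to exclude $a_{1}\geq 4$, observe that for $a_{1}=3$ a non-integral entry forces a gap in $[1,5]$, and verify directly that $a_{1}\in\{0,1,2\}$ satisfy the criterion of Theorem~\ref{Thm}. Your handling of the borderline case $a_{1}=3$ is in fact slightly more explicit than the paper's (you name the missing integers $2$ and $4$), and you also spell out $a_{1}=0$, which the paper leaves implicit.
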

\begin{proof}
The set $T^{G_{2}}_{\lambda,2}$ is given by $\{ \frac{1}{3}(a_{1}+4),\ \frac{1}{3}(2a_{1}+5),\ \frac{1}{6}(3a_{1}+9),\ 1,\ a_{1}+2\}$. If $a_{1}>3$, then $a_{1}+2 \geq 6$. As there are only five equations in $T^{G_{2}}_{\lambda,1}$, there exists an integer $l \in [1,a_{1}+2]$ such that $n_{l}=0$. When $a_{1}=3$, we have $\frac{1}{3}(a_{1}+4)=\frac{7}{3}$. Therefore, there exists an integer $l \in [1,5]$ such that $n_{l}=0$. In these case, as per Theorem \ref{Thm}, $E_{\lambda}$ is not an ACM vector bundle.

Suppose that $a_{1}=1$. Set $T^{G_{2}}_{\lambda,2}$ is given by $\{\frac{5}{3},\ \frac{7}{3},\ 2,\ 1,\ 3\}$. Then, $n_{l}=1$ for $l \in [1,3]$. Again, as per Theorem \ref{Thm}, $E_{\lambda}$ is an ACM vector bundle. When $a_{1}$ is equal to $2$, then set $T_{\lambda,2}^{G_{2}}$ is given by $\{1,2,3,4,\frac{5}{2}\}$. Hence, $E_{\lambda}$ is an ACM vector bundle if and only if $a_{2}=0$ and $a_{1}=0,1,2$.
\end{proof}

\end{document}